\newif\ifdetails
\newcommand{\DETAIL}[1]%
{\ifdetails\par\fbox{\begin{minipage}{0.9\linewidth}\textit{Detail:}
      #1\end{minipage}}\par\fi}
\newcommand{\TODO}[1]%
{\ifdetails\par\fbox{\begin{minipage}{0.9\linewidth}\textbf{TODO:}
      #1\end{minipage}}\par\fi}
\newtheorem{lemma}{Lemma}
\newtheorem{proposition}[lemma]{Proposition}
\newtheorem{theorem}[lemma]{Theorem}
\newtheorem{corollary}[lemma]{Corollary}
\theoremstyle{remark}
\newtheorem{remark}{Remark}
\newtheorem{question}{Question}
\newcommand{\old}[1]{{}}
\title{Inducibility of topological trees}
\author{Audace A. V. Dossou-Olory}
\author{Stephan Wagner}
\thanks{The first author was supported by Stellenbosch University and African Institute for Mathematical Science (AIMS) South Africa, the second author was supported by the National Research Foundation of South Africa, grant number 96236.}
\address{Audace A. V. Dossou-Olory and Stephan Wagner\\ Department of Mathematical Sciences \\ Stellenbosch University \\ Private Bag X1, Matieland 7602 \\ South Africa}
\email{\{audaced,swagner\}@sun.ac.za}
\subjclass[2010]{Primary 05C05; secondary 05C35}
\keywords{topological trees, inducibility, maximum density, degree-restricted, leaf-induced subtrees, limiting minimum density, $d$-ary trees, caterpillars, stars}
\begin{document}

\begin{abstract}
Trees without vertices of degree $2$ are sometimes named topological trees. In this work, we bring forward the study of the inducibility of (rooted) topological trees with a given number of leaves. The inducibility of a topological tree $S$ is the limit superior of the proportion of all subsets of leaves of $T$ that induce a copy of $S$ as the size of $T$ grows to infinity. In particular, this relaxes the degree-restriction for the existing notion of the inducibility in $d$-ary trees. We discuss some of the properties of this generalised concept and investigate its connection with the degree-restricted inducibility. In addition, we prove that stars and binary caterpillars are the only topological trees that have an inducibility of $1$. We also find an explicit lower bound on the limit inferior of the proportion of all subsets of leaves of $T$ that induce either a star or a binary caterpillar as the size of $T$ tends to infinity.
\end{abstract}

\maketitle

\section{Introduction and preliminary}

The parameter \textit{inducibility} of graphs first appears in the mathematical literature in a paper of Pippenger and Golumbic \cite{pippenger1975inducibility} in 1975. In plain language, Pippenger and Golumbic studied the maximum frequency of a simple graph occurring as a subgraph induced by vertices of another simple graph with a sufficiently large number of vertices. Over the past four decades, many authors have extensively discussed the inducibility in various settings and a number of papers has been published.

The inducibility of trees, however, was first initiated much more recently in \cite{bubeck2016local} by Bubeck and 
Linial. They studied the concept in trees with a given number of vertices. Shortly afterwards, Czabarka, Sz{\'e}kely and the second author of the current paper introduced in \cite{czabarka2016inducibility} the inducibility in rooted binary trees with a given number of leaves and also presented an application to random tanglegrams. The recent paper~\cite{AudaceStephanPaper1} proposed an extension of the inducibility of binary trees to $d$-ary trees for every $d\geq 2$.

\medskip
The object of this paper is to continue the investigation of the inducibility of trees. In particular, it is natural to consider a variant of the concept that relaxes the degree-restriction on the vertices of the tree: one might be interested in knowing how large the number of appearances of a tree in another larger tree with a given number of leaves can be.

Before we can present our results, we need to give some formal definitions.

\medskip
An unrooted tree that does not contain any subdivided edge, that is, no vertices of degree $2$, is called a \emph{topological} tree (see e.g. Bergeron et al. \cite{bergeron1998combinatorial} or Allman and Rhodes \cite{allman2004mathematical}), or also \emph{series-reduced} or \emph{homeomorphically irreducible} tree.
Based on this definition, we shall call any rooted tree in which every vertex has outdegree at least $2$ a (rooted) topological tree.

When the tree consists of a single vertex, we shall treat it as both leaf and root. Figure~\ref{fewer5leaves} displays all the topological trees with fewer than five leaves: the counting sequence for the number of $n$-leaf topological trees starts
\begin{align*}
1, 1, 2, 5, 12, 33, 90, 261, 766, 2312, 7068, \ldots\,,
\end{align*}
see A000669 in \text{\cite{oeis}} for more information.

\begin{figure}[htbp] \centering
  \begin{subfigure}[b]{0.16\textwidth} \centering 
  \begin{tikzpicture}
\filldraw [black] circle (3pt);
\end{tikzpicture}
  \caption{$1$}
  \end{subfigure}~
  \begin{subfigure}[b]{0.16\textwidth} \centering     
\begin{tikzpicture}[thick,level distance=9mm]
\tikzstyle{level 1}=[sibling distance=11mm]   
\node [circle,draw]{}
child {[fill] circle (2pt)}
child {[fill] circle (2pt)};
\end{tikzpicture}
  \caption{$2$}
  \end{subfigure}~
  \begin{subfigure}[b]{0.16\textwidth} \centering 
\begin{tikzpicture}[thick,level distance=9mm]
\tikzstyle{level 1}=[sibling distance=11mm]
\tikzstyle{level 2}=[sibling distance=9mm]
\node [circle,draw]{}
child {child {[fill] circle (2pt)} child {[fill] circle (2pt)}}
child {[fill] circle (2pt)};
\end{tikzpicture}
   \caption{$3$}
  \end{subfigure}~
   \begin{subfigure}[b]{0.18\textwidth} \centering 
\begin{tikzpicture}[thick,level distance=10mm]
\tikzstyle{level 1}=[sibling distance=8mm]
\node [circle,draw]{}
child {[fill] circle (2pt)}
child {[fill] circle (2pt)}
child {[fill] circle (2pt)};
\end{tikzpicture}
  \caption{$3$}
  \end{subfigure}~
  \begin{subfigure}[b]{0.18\textwidth} \centering
\begin{tikzpicture}[thick,level distance=10mm]
\tikzstyle{level 1}=[sibling distance=11mm]
\tikzstyle{level 2}=[sibling distance=7mm]
\node [circle,draw]{}
child {child {[fill] circle (2pt)}child {[fill] circle (2pt)}child {[fill] circle (2pt)}}
child {[fill] circle (2pt)};
\end{tikzpicture}
  \caption{$4$}
  \end{subfigure}\newline \newline
  \begin{subfigure}[b]{0.2\textwidth} \centering
\begin{tikzpicture}[thick,level distance=10mm]
\tikzstyle{level 1}=[sibling distance=12mm]
\tikzstyle{level 2}=[sibling distance=7mm]
\node [circle,draw]{}
child {child {[fill] circle (2pt)} child {[fill] circle (2pt)}}
child {child {[fill] circle (2pt)} child {[fill] circle (2pt)}};
\end{tikzpicture}
 \caption{$4$}
  \end{subfigure}\quad
  \begin{subfigure}[b]{0.2\textwidth} \centering
\begin{tikzpicture}[thick,level distance=10mm]
\tikzstyle{level 1}=[sibling distance=11mm]
\tikzstyle{level 2}=[sibling distance=8mm]
\tikzstyle{level 3}=[sibling distance=6mm]
\node [circle,draw]{}
child {child {child {[fill] circle (2pt)} child {[fill] circle (2pt)}} child {[fill] circle (2pt)}}
child {[fill] circle (2pt)};
\end{tikzpicture}
   \caption{$4$}
  \end{subfigure}\quad
 \begin{subfigure}[b]{0.2\textwidth} \centering
  \begin{tikzpicture}[thick,level distance=10mm]
\tikzstyle{level 1}=[sibling distance=10mm]
\tikzstyle{level 2}=[sibling distance=7mm]
\node [circle,draw]{}
child {child {[fill] circle (2pt)}child {[fill] circle (2pt)}}
child {[fill] circle (2pt)}
child {[fill] circle (2pt)};
\end{tikzpicture}
  \caption{$4$}
  \end{subfigure}\quad
  \begin{subfigure}[b]{0.2\textwidth} \centering
 \begin{tikzpicture}[thick,level distance=11mm]
\tikzstyle{level 1}=[sibling distance=7mm]
\node [circle,draw]{}
child {[fill] circle (2pt)}
child {[fill] circle (2pt)}
child {[fill] circle (2pt)}
child {[fill] circle (2pt)};
\end{tikzpicture}
  \caption{$4$}
  \end{subfigure}
\caption{All the topological trees with fewer than five leaves.}\label{fewer5leaves}
\end{figure}
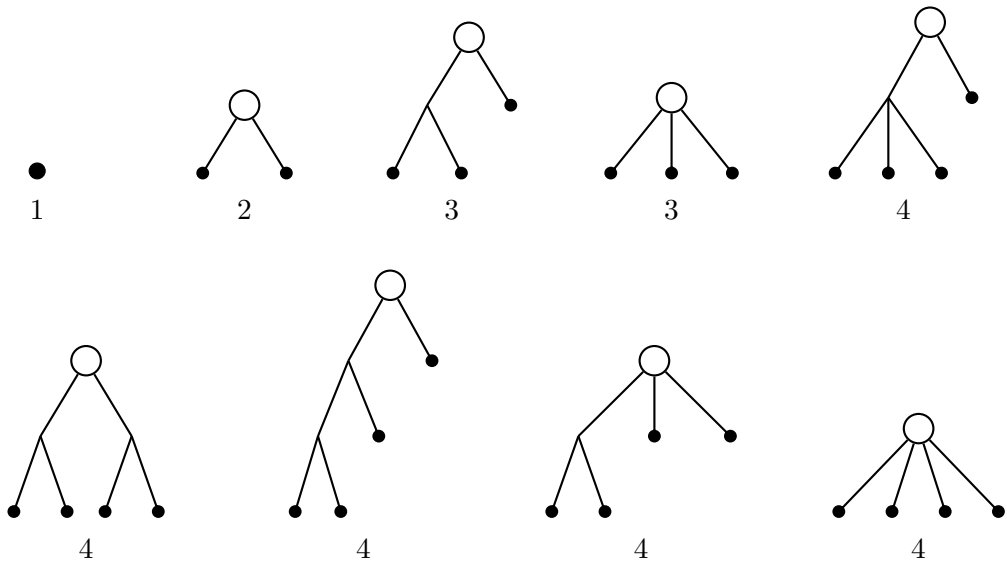

Every subset $L$ of leaves of a topological tree $T$ induces another tree which is obtained by the following operation: first extract the smallest subtree of $T$ that connects the leaves in $L$ and then erase the vertices of outdegree $1$ (if any). This operation is illustrated in Figure~\ref{leaf-induced}.

\begin{figure}[!h]\centering  
\begin{tikzpicture}[thick]
\node [circle,draw] (r) at (0,0) {};

\draw (r) -- (-2,-2);
\draw (r) -- (0,-4);
\draw (r) -- (2,-2);
\draw (-2,-2) -- (-2.5,-4);
\draw (-2,-2) -- (-1.5,-4);
\draw (2,-2) -- (1,-4);
\draw (2,-2) -- (2,-4);
\draw (2,-2) -- (3,-4);
\draw (1.25,-3.5) -- (1.5,-4);

\node [fill,circle, inner sep = 2pt ] at (-2.5,-4) {};
\node [fill,circle, inner sep = 2pt ] at (-1.5,-4) {};
\node [fill,circle, inner sep = 2pt ] at (0,-4) {};
\node [fill,circle, inner sep = 2pt ] at (1,-4) {};
\node [fill,circle, inner sep = 2pt ] at (1.5,-4) {};
\node [fill,circle, inner sep = 2pt ] at (2,-4) {};
\node [fill,circle, inner sep = 2pt ] at (3,-4) {};

\node at (-1.5,-4.5) {$\ell_1$};
\node at (0,-4.5) {$\ell_2$};
\node at (1,-4.5) {$\ell_3$};
\node at (2,-4.5) {$\ell_4$};

\node [circle,draw] (r1) at (6,-2) {};

\draw (r1) -- (5,-4);
\draw (r1) -- (6,-4);
\draw (r1) -- (7,-4);
\draw (6.75,-3.5)--(6.5,-4);

\node [fill,circle, inner sep = 2pt ] at (5,-4) {};
\node [fill,circle, inner sep = 2pt ] at (6,-4) {};
\node [fill,circle, inner sep = 2pt ] at (6.5,-4) {};
\node [fill,circle, inner sep = 2pt ] at (7,-4) {};

\node at (5,-4.5) {$\ell_1$};
\node at (6,-4.5) {$\ell_2$};
\node at (6.5,-4.5) {$\ell_3$};
\node at (7,-4.5) {$\ell_4$};

\end{tikzpicture}
\caption{A ternary tree and the subtree induced by four leaves $\{\ell_1,\ell_2,\ell_3,\ell_4\}$.}\label{leaf-induced}
\end{figure}
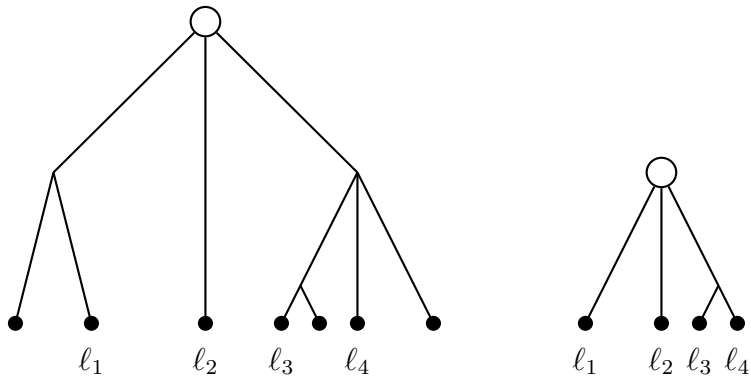

Such a subtree will be referred to as a \textit{leaf-induced subtree} of $T$. It has a root in a natural way which we define to be the most recent common parent shared by the leaves in $L$. We shall write $|T|$ for the number of leaves of a topological tree $T$.

\medskip
For two topological trees $S,T$, we denote by $c(S,T)$ the number of copies of $S$ in $T$. Formally, $c(S,T)$ is the number of subsets of the leaf set of $T$ that induce a tree isomorphic in the sense of rooted trees to $S$ (i.e., the isomorphism maps the root of one tree to the root of the other tree). Let $\gamma(S,T)=c(S,T)/\binom{|T|}{|S|}$ be its normalised version, which lies between $0$ and $1$ by definition. For brevity, $\gamma(S,T)$ will be referred to as the \textit{density} of $S$ in $T$. We are specifically interested in the maximum of $\gamma(S,T)$ in the limit: the quantity
\begin{align}\label{formuJS}
J(S):= \limsup_{\substack{|T| \to \infty \\ T~\text{topological tree}}} \gamma(S,T) = \limsup_{n \to \infty} \max_{\substack{|T| =n \\ T~\text{topological tree}}} \gamma(S,T)\,,
\end{align}
where the maximum runs over all topological trees, will be called the \textit{inducibility of $S$} (in topological trees).

\medskip
A topological tree with the property that every vertex has outdegree no more than $d~(\geq 2)$ will be called a \textit{$d$-ary tree}. In this note, we shall simply call a $2$-ary tree a \textit{binary} tree.

When the maximum of the density $\gamma(D,T)$ of a $d$-ary tree $D$ in $T$ is taken over all $d$-ary trees, we shall speak of the \textit{inducibility in $d$-ary trees}. Strictly speaking, the \textit{inducibility} $I_d(D)$ of a $d$-ary tree $D$ in $d$-ary trees is
\begin{align*}
I_d(D) = \lim_{n\to \infty} \max_{\substack{|T|=n \\ T~\text{$d$-ary tree}}} \gamma(D,T)\,,
\end{align*}
as presented in a previous article \cite{AudaceStephanPaper1}, where the limit is shown to exist. We will prove that this is also the case for $J(S)$, see Theorem~\ref{RelationMaxJS}.

\medskip
The $d$-ary trees that attain the maximum inducibility $1$ are determined in \cite{AudaceStephanPaper1}: for every $d\geq 2$, the maximal trees are so-called binary caterpillars (paths with one pendant edge dropped from all the vertices except for one of the endvertices). Things change, however, when the vertices of the tree are allowed to have any outdegree, see Theorem~\ref{MaxTrees}.

\medskip
Clearly, for any given $d$-ary tree $D$, the inducibility $I_d(D)$ is non-decreasing with respect to $d$, and thus tends to a definite limit as $d \to \infty$. In the following section, we establish that $J(S)=\lim_{d\to \infty} I_d(S)$ for every topological tree $S$. We also prove a link between the maximum density of $S$ in topological trees and the actual inducibility $J(S)$, and demonstrate that stars and binary caterpillars are the only topological trees with the maximum inducibility $1$.

Furthermore, we find an explicit lower bound on the limit inferior of the proportion of all subsets of leaves of $T$ that induce either a star or a binary caterpillar as the size of the topological tree $T$ grows to infinity (see Proposition~\ref{infTopol}). In addition, we show that one can obtain a lower bound on the inducibility of any topological tree.

\section{Auxiliary results}

We recall the following fundamental results from \cite{AudaceStephanPaper1}. We shall utilise them in various places in the note.

\begin{theorem}[\cite{AudaceStephanPaper1}]\label{maxdensityId}
For every fixed positive integer $d\geq 2$ and every $d$-ary tree $D$, the double inequality
\begin{align*}
0\leq \max_{\substack{|T|=n\\T~\text{$d$-ary tree}}}  \gamma(D,T) - I_d(D) \leq |D|(-1+|D|)n^{-1}
\end{align*}
holds for all $n$.
\end{theorem}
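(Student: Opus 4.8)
The plan is to work entirely with the normalised counts. Write $k=|D|$ and, for $n\ge k$, set $M_n=\max_{|T|=n}\gamma(D,T)$ with the maximum over $d$-ary trees; for $n<k$ the density vanishes identically, so I take $n\ge k$ throughout. The first ingredient I would record is the \emph{heredity} of leaf-induced subtrees: if $L\subseteq S$ are leaf sets of $T$, then $(T[S])[L]=T[L]$, since suppressing outdegree-$1$ vertices commutes with passing to a smaller leaf set. Consequently, for fixed $k\le m\le n$ and any $d$-ary $T$ with $n$ leaves, a copy of $D$ sitting on a $k$-subset $L$ is induced inside $T[S]$ for exactly the $\binom{n-k}{m-k}$ sets $S$ of size $m$ containing $L$; double counting gives $\sum_{|S|=m}c(D,T[S])=\binom{n-k}{m-k}c(D,T)$, which after normalisation becomes the averaging identity
\begin{align*}
\gamma(D,T)=\binom{n}{m}^{-1}\sum_{|S|=m}\gamma(D,T[S]).
\end{align*}

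Since every $T[S]$ is again a $d$-ary tree on exactly $m$ leaves (suppression only lowers outdegrees), each summand is at most $M_m$, whence $\gamma(D,T)\le M_m$; maximising over $T$ yields $M_n\le M_m$ for all $n\ge m\ge k$. Thus $(M_n)_{n\ge k}$ is non-increasing and bounded below by $0$, hence convergent, with $I_d(D)=\lim_n M_n=\inf_n M_n$. In particular $M_n\ge I_d(D)$, which is the left-hand inequality (and simultaneously re-proves that the limit defining $I_d(D)$ exists).

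For the right-hand inequality I would move in the opposite direction through a blow-up. Fix $n\ge k$, let $T^\ast$ be an $n$-leaf $d$-ary tree with $\gamma(D,T^\ast)=M_n$, and for $t\ge 1$ let $T^{(t)}$ be obtained by replacing every leaf of $T^\ast$ with a copy of a balanced $d$-ary tree on $t$ leaves; then $T^{(t)}$ is a $d$-ary tree with $nt$ leaves. Selecting one leaf from each of $k$ distinct clusters induces, after suppression, exactly the subtree that the $k$ corresponding leaves induce in $T^\ast$ (each such cluster collapses to its root), so these selections alone contribute $c(D,T^\ast)\,t^k=M_n\binom{n}{k}t^k$ copies of $D$. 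Discarding every other copy,
\begin{align*}
M_{nt}\ge\gamma(D,T^{(t)})\ge M_n\,\frac{\binom{n}{k}t^k}{\binom{nt}{k}}=M_n\prod_{i=0}^{k-1}\frac{t(n-i)}{nt-i}\ge M_n\prod_{i=0}^{k-1}\Bigl(1-\frac{i}{n}\Bigr).
\end{align*}
Letting $t\to\infty$ gives $I_d(D)\ge M_n\prod_{i=0}^{k-1}(1-i/n)$. Combining this with $M_n\le 1$ and the elementary bound $\prod_i(1-x_i)\ge 1-\sum_i x_i$ yields
\begin{align*}
M_n-I_d(D)\le M_n\Bigl(1-\prod_{i=0}^{k-1}\bigl(1-\tfrac{i}{n}\bigr)\Bigr)\le\sum_{i=0}^{k-1}\frac{i}{n}=\frac{k(k-1)}{2n}\le\frac{|D|(|D|-1)}{n},
\end{align*}
which is the claimed bound, in fact with a factor $2$ to spare.

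The double-counting identity and the product estimate are routine; the step I expect to require genuine care — and which is the main obstacle — is the blow-up analysis. Specifically one must verify cleanly that a transversal choosing a single leaf from each of $k$ distinct clusters induces precisely the subtree determined by the $k$ corresponding leaves of $T^\ast$, so that the count $c(D,T^\ast)\,t^k$ is exact rather than merely a lower bound in disguise, and one must check that the inflated object is genuinely $d$-ary, i.e.\ that the root of each inserted balanced tree has outdegree between $2$ and $d$.
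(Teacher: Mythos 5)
Your proof is correct, but it takes a genuinely different route from the paper's. (Note that Theorem~\ref{maxdensityId} is quoted from \cite{AudaceStephanPaper1}; this paper replays the intended argument in the proof of Theorem~\ref{RelationMaxJS}, which is the natural benchmark.) For the left inequality, the paper averages over single leaves: from $\sum_{l\in L(T)}c_l(D,T)=|D|\,c(D,T)$ it extracts a below-average leaf $l_1$, deletes it to get $\gamma(D,T^-)\ge\gamma(D,T)$, and obtains monotonicity of $M_n$ one step at a time; your double-counting identity $\gamma(D,T)=\binom{n}{m}^{-1}\sum_{|S|=m}\gamma(D,T[S])$ (resting on the heredity fact $(T[S])[L]=T[L]$, which you correctly isolate, and on $\binom{n}{m}\binom{m}{k}=\binom{n}{k}\binom{n-k}{m-k}$) gives $M_n\le M_m$ for all $m\le n$ in one stroke and simultaneously re-proves existence of the limit. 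For the right inequality the divergence is larger: the paper duplicates an above-average leaf $l_2$ to get $c(D,T^+)\ge(1+|D|/|T|)\,c(D,T)$, iterates infinitely many single-leaf additions, and controls the infinite product via $\prod_i(1-y_i)\ge 1-\sum_i y_i$ together with the telescoping sum $\sum_{i\ge 0}\frac{1}{(n+i)(n+i+1)}=\frac{1}{n}$; you instead do a one-shot blow-up of an optimal $T^\ast$ and let $t\to\infty$. Your blow-up analysis is sound: a transversal through $k$ distinct clusters induces exactly $T^\ast[\{\ell_1,\dots,\ell_k\}]$, since the path inside each cluster is suppressed and the cluster root inherits the role of the original leaf, and only the lower bound $c(D,T^{(t)})\ge c(D,T^\ast)\,t^k$ is needed --- the ``exactness'' you flag as the main obstacle is a non-issue, because copies of $D$ using two leaves of one cluster may simply be discarded; the inflated tree is $d$-ary because every internal vertex of the inserted tree has outdegree in $[2,d]$ and the old leaf's parent keeps its outdegree (though you should say ``any $d$-ary tree on $t$ leaves'' rather than ``balanced,'' since complete $d$-ary trees only realise $t=d^h$). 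As for what each approach buys: the paper's incremental leaf-by-leaf method transfers verbatim to the liminf analysis of Proposition~\ref{infTopol}, where leaves must be handled one at a time, while your blow-up is shorter, avoids the infinite product, and yields the sharper constant $k(k-1)/(2n)$, a factor of $2$ better than claimed; your restriction to $n\ge|D|$ is harmless and matches the paper's own proviso ``for all $n\ge|S|$'' in Theorem~\ref{RelationMaxJS}.
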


\begin{theorem}[\cite{AudaceStephanPaper1}]\label{max1dAry}
Let $d\geq 2$ be an arbitrary but fixed positive integer. Among $d$-ary trees, only binary caterpillars have ($d$-ary) inducibility $1$.
\end{theorem}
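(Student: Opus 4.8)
The plan is to prove both implications of the characterisation: that every binary caterpillar has $d$-ary inducibility $1$, and conversely that a $d$-ary tree of inducibility $1$ is a binary caterpillar. Write $C_k$ for the binary caterpillar with $k$ leaves (the single vertex if $k=1$). For the first implication I would show that for all $n\ge k$ \emph{every} $k$-element subset of the leaves of $C_n$ induces a copy of $C_k$: among the chosen leaves the one branching off closest to the root becomes an isolated leaf-child of the induced root, while the remaining $k-1$ leaves induce $C_{k-1}$ by induction, so the induced tree is again a caterpillar. Hence $\gamma(C_k,C_n)=1$ for all $n$, and since $C_n$ is a $d$-ary tree for every $d\ge2$, this yields $I_d(C_k)=1$.

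For the converse the key simplification is to replace the limiting quantity $I_d(D)$ by an exact, finite statement using the rate of convergence in Theorem~\ref{maxdensityId}. That result gives $\max_{|T|=n}\gamma(D,T)\ge I_d(D)$ for every $n$, so the hypothesis $I_d(D)=1$ forces $\max_{|T|=n}\gamma(D,T)=1$; as this maximum is attained (there are only finitely many $d$-ary trees with $n$ leaves), for every $n\ge|D|$ there is a $d$-ary tree $T$ with $|T|=n$ in which every $k$-subset of leaves induces $D$, where $k=|D|$. It therefore suffices to prove the following rigidity statement: \emph{if $T$ is a $d$-ary tree with sufficiently many leaves, all of whose $k$-subsets induce the same tree $D$, then $D=C_k$.}

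I would establish this by induction on $k$, the case $k=2$ being immediate since every pair of leaves induces a cherry, so $D=C_2$. For $k\ge3$, let $r$ be the root of $T$ with child-subtrees $T_1,\dots,T_m$, where $2\le m\le d$, and let $T_1$ be a largest one, so that $|T_1|\ge|T|/d$ is still large. Fix a leaf $b$ in another child-subtree $T_2$. For an arbitrary $(k-1)$-subset $A$ of the leaves of $T_1$, the set $A\cup\{b\}$ induces $D$; since $b$ and $A$ lie in different child-subtrees of $r$, suppressing vertices of outdegree $1$ shows that the induced tree is exactly the rooted tree whose root has two children, the single leaf $b$ and the tree $T[A]$ induced by $A$. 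Consequently $D$ has a leaf as one root-child, its other root-child $D'$ equals $T[A]$, and---crucially---this holds for \emph{every} $(k-1)$-subset $A$ of $T_1$. Hence every $(k-1)$-subset of the large $d$-ary tree $T_1$ induces the fixed tree $D'$, so the induction hypothesis gives $D'=C_{k-1}$ and therefore $D=C_k$.

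The main obstacle is this rigidity step, and the point that makes the induction run is the observation that pairing one fixed ``far'' leaf $b$ with an \emph{arbitrary} $(k-1)$-subset of the heaviest branch $T_1$ forces all those induced subtrees to coincide; the bounded outdegree enters only to guarantee that $T_1$ remains large enough to invoke the inductive hypothesis (one may take the size threshold $N(k,d)=2d^{k-2}$). I expect the only routine work to be the bookkeeping of these thresholds and the verification that the degree-one vertices suppress as claimed, neither of which presents a genuine difficulty. It is worth stressing that passing to the exact statement via Theorem~\ref{maxdensityId} is precisely what lets us avoid any supersaturation or stability argument, since we then only ever analyse trees with $\gamma(D,T)$ equal to $1$.
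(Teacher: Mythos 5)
Your proposal is correct, but your converse runs along a genuinely different route from the source. This paper does not prove Theorem~\ref{max1dAry} itself (it quotes it from \cite{AudaceStephanPaper1}), but the method is visible in the proof of Theorem~\ref{MaxTrees}, which is explicitly a refinement of it: a pigeonhole on height versus leaf count (an $m$-ary tree of height at most $h$ has at most $m^h$ leaves) shows that \emph{every} $d$-ary tree with more than $d^{k-2}$ leaves has height at least $k-1$ and hence contains the $k$-leaf binary caterpillar as a leaf-induced subtree; so if $D$ is not a caterpillar, then $\gamma(D,T)\leq 1-\binom{n}{k}^{-1}<1$ for every $d$-ary tree $T$ with a fixed number $n>d^{k-2}$ of leaves, and the inequality $I_d(D)\leq \max_{|T|=n}\gamma(D,T)$ from Theorem~\ref{maxdensityId} finishes. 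You share that last reduction --- and your use of it to pass from $I_d(D)=1$ to trees of every size with density exactly $1$ is legitimate, since the maximum is attained over the finitely many $d$-ary trees with $n$ leaves --- but you then replace the unavoidability statement by an inductive rigidity lemma at the root: pairing a fixed leaf $b$ from a second branch with an arbitrary $(k-1)$-subset $A$ of the largest branch $T_1$ pins $D$ down as a root with one leaf child and one fixed branch $D'$ (well-defined since $k\geq 3$ makes the branch containing $A$ a non-leaf), and $|T_1|\geq |T|/d$ keeps the induction alive; your suppression analysis and the threshold $N(k,d)=2d^{k-2}$ check out. As for what each approach buys: the paper's unavoidability fact is stronger and reusable --- it is exactly what powers Proposition~\ref{infTopol} and the topological refinement in Theorem~\ref{MaxTrees} --- whereas your argument avoids exhibiting any explicit caterpillar copy but leans essentially on the degree bound: without it the largest branch of a density-$1$ tree can be tiny (extreme case: a star), which is precisely where your induction breaks and where stars join the caterpillars as the maximal trees in the topological setting. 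A cosmetic caution: you write $C_k$ for binary caterpillars, while the paper reserves $C_k$ for stars and uses $F^2_k$ for caterpillars.
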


\section{Main results}

As for the degree-restricted inducibility, we begin our investigation by providing an estimate on how much the general inducibility can differ from the maximum density $\gamma(S,T)$ in topological trees.

The result is an analogue of Theorem~\ref{maxdensityId} for $J(S)$. It asserts that for every topological tree $S$, the maximum of $\gamma(S,T)$ tends to a definite limit as $|T| \to \infty$ and that the precise gap between the maximum density and the limit is of order at most $\mathcal{O}(|T|^{-1})$.

\begin{theorem}\label{RelationMaxJS}
Let $S$ be a topological tree. Then  the double inequality
\begin{align*}
0\leq \max_{\substack{|T|=n \\ T~\text{topological tree}}}  \gamma(S,T) - J(S) \leq |S|(-1+|S|)n^{-1}
\end{align*}
is valid for all $n \geq |S|$.  In particular, we have
\begin{align*}
J(S)=\lim_{n\to \infty}\max_{\substack{|T|=n \\ T~\text{topological tree}}} \gamma(S,T)\,.
\end{align*}
\end{theorem}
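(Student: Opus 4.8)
The plan is to treat the two inequalities separately: the lower bound, together with the existence of the limit, via a monotonicity argument, and the upper bound (the $\mathcal{O}(n^{-1})$ rate) via an explicit blow-up construction. Throughout write $m=|S|$ and $a_n=\max_{|T|=n}\gamma(S,T)$, the maximum being taken over topological trees with $n$ leaves.

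First I would record the \emph{heredity} of the leaf-inducing operation: if $A\subseteq L$ are sets of leaves of $T$, then inducing on $A$ inside the subtree induced by $L$ gives the same rooted tree as inducing on $A$ directly in $T$ (the minimal connecting subtree and the subsequent suppression of out-degree-$1$ vertices are determined only by the connection pattern of the chosen leaves). Granting this, a double count yields, for each topological tree $T$ with $n>m$ leaves, the exact averaging identity
\[
\gamma(S,T)=\frac1n\sum_{\ell}\gamma(S,T\setminus\ell),
\]
where $T\setminus\ell$ is the topological tree induced by the $n-1$ leaves other than $\ell$. Indeed each of the $c(S,T)$ copies of $S$, sitting on a leaf set $A$, survives precisely in the $n-m$ deletions $T\setminus\ell$ with $\ell\notin A$, and converting counts to densities through $\binom{n-1}{m}/\binom{n}{m}=(n-m)/n$ produces the identity. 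Since every summand is at most $a_{n-1}$, this forces $a_n\le a_{n-1}$. Hence $(a_n)_{n\ge m}$ is non-increasing and bounded below by $0$, so it converges; by the second expression for $J(S)$ in \eqref{formuJS} we obtain $J(S)=\limsup_n a_n=\lim_n a_n=\inf_n a_n$, which is the displayed ``in particular'' statement and also gives $a_n\ge J(S)$, i.e. the left inequality, for all $n\ge m$.

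For the rate I would exhibit large trees whose $S$-density is almost $a_n$. Fix an optimal tree $T^{\star}$ on $n$ leaves, so $c(S,T^{\star})=a_n\binom{n}{m}$, and for $t\ge 2$ form $T_{n,t}$ by replacing each leaf of $T^{\star}$ by a star with $t$ leaves; this is again a topological tree, now on $nt$ leaves. Choosing $m$ of the $n$ stars whose centres induce $S$ in $T^{\star}$ and then one leaf from each chosen star produces a copy of $S$ in $T_{n,t}$: the $m$ selected star-centres acquire out-degree $1$ and are suppressed, so the induced tree is exactly the subtree of $T^{\star}$ spanned by the corresponding original leaves. These transversal selections are pairwise distinct and number $c(S,T^{\star})\,t^{m}$, whence
\[
\gamma(S,T_{n,t})\ge\frac{a_n\binom{n}{m}t^{m}}{\binom{nt}{m}}.
\]
Letting $t\to\infty$, the right-hand side tends to $a_n\,n!/\bigl((n-m)!\,n^{m}\bigr)=a_n\prod_{i=1}^{m-1}(1-i/n)$, and since the $T_{n,t}$ are topological trees with $nt\to\infty$ leaves we conclude $J(S)\ge a_n\prod_{i=1}^{m-1}(1-i/n)$. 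Finally $\prod_{i=1}^{m-1}(1-i/n)\ge 1-\tfrac1n\sum_{i=1}^{m-1}i=1-\tfrac{m(m-1)}{2n}$ together with $0\le a_n\le 1$ gives $a_n-J(S)\le \tfrac{m(m-1)}{2n}\le m(m-1)/n$, the right inequality.

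The routine parts are the binomial bookkeeping in the averaging identity and the asymptotics of $\binom{nt}{m}$; the two places that genuinely need care are structural. I expect the main obstacle to be the blow-up count: one must be certain that a transversal selection of one leaf per star induces \emph{precisely} $S$ — that suppressing the out-degree-$1$ centres recovers the subtree of $T^{\star}$ on the chosen original leaves — and that distinct selections give distinct copies, while the companion heredity statement underlying the deletion identity must be checked with the same care. Once these two structural facts are secured, the convergence and the estimates are entirely mechanical.
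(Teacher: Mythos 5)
Your proposal is correct, and while its first half coincides with the paper's argument, its second half takes a genuinely different route. For the monotonicity/lower bound, the paper uses the same averaging count $\sum_{l} c_l(S,T)=|S|\,c(S,T)$, but only extracts by pigeonhole a leaf $l_1$ lying in few copies and deletes it; your exact identity $\gamma(S,T)=\frac1n\sum_{\ell}\gamma(S,T\setminus\ell)$ is a clean (slightly sharper) packaging of the same deletion idea, and your heredity lemma is indeed the structural fact the paper uses implicitly when it asserts $c(S,T^-)\geq(1-|S|/|T|)\,c(S,T)$. The real divergence is in the upper bound on $a_n-J(S)$: the paper picks a leaf $l_2$ with $c_{l_2}(S,T)\geq |S|\,c(S,T)/|T|$, splits it into a cherry to get $c(S,T^+)\geq(1+|S|/|T|)\,c(S,T)$, iterates this single-leaf growth infinitely often, and controls the resulting infinite product via $\prod_i(1-y_i)\geq 1-\sum_i y_i$; you instead take one extremal tree $T^\star$ on $n$ leaves, blow up every leaf into a $t$-star, count the $c(S,T^\star)\,t^{m}$ transversal copies (your checks are the right ones: suppression of the outdegree-$1$ star centres, the MRCA root convention, and injectivity of the selection map all go through), and let $t\to\infty$ to get $J(S)\geq a_n\prod_{i=1}^{m-1}(1-i/n)$ in one shot. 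Your construction is more direct, avoids the iterated-product bookkeeping, and in fact yields the better constant $|S|(|S|-1)/(2n)$; the trade-off is that it leans on unbounded outdegrees, so unlike the paper's proof it is not verbatim the same argument as for the degree-restricted Theorem~\ref{maxdensityId} (though one could substitute complete $d$-ary blow-ups there), whereas the paper's route is exactly the $d$-ary proof transplanted, which is why the paper can simply cite those intermediary inequalities again in Theorem~\ref{JSlimId} and Proposition~\ref{infTopol}.
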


\begin{proof}
We can follow the same averaging argument used in \cite{AudaceStephanPaper1} to prove Theorem~\ref{maxdensityId}. Since we shall make heavy use of the intermediary results that appear in the proof, we present them for completeness.

Let $S$ and $T$ be two topological trees such that $|S|\leq |T|$. We write $L(T)$ for the set of leaves of $T$. For $l \in L(T)$, let us denote by $c_{l}(S,T)$ the number of $l$-containing subsets of leaves of $T$ that induce a copy of $S$. Thus, since
\begin{align*}
\sum_{l \in L(T)} c_{l}(S,T) =|S|\cdot c(S,T)\,,
\end{align*}
we deduce that there exist leaves $l_1$ and $l_2$ of $T$ (possibly $l_1=l_2$) for which the double inequality
\begin{align}\label{InView}
c_{l_1}(S,T) \leq \frac{|S| \cdot c(S,T)}{|T|} \leq c_{l_2}(S,T)
\end{align}
is satisfied. The number of copies of $S$ in $T$ not involving the leaf $l_1$ is
\begin{align*}
c(S,T) - c_{l_1}(S,T)\geq \Big(1-\frac{|S|}{|T|} \Big)c(S,T)
\end{align*}
by relation~\eqref{InView}. Let $T^{-}$ be the topological tree that results when the leaf $l_1$ of $T$ is removed and the unique vertex adjacent to $l_1$ is erased if it has outdegree $2$ in $T$. We have
$$c(S,T^{-})\geq \Big(1-\frac{|S|}{|T|} \Big)c(S,T)\,,$$
and dividing by $\binom{|T|-1}{|S|}$, we obtain
$$\gamma(S,T^-) \geq \gamma(S,T).$$
Since $T$ is an arbitrary topological tree, this implies that
\begin{align*}
\max_{\substack{|T|=n-1\\T~\text{topological tree}}} \gamma(S,T) \geq \max_{\substack{|T|=n\\T~\text{topological tree}}} \gamma(S,T)
\end{align*}
for every $n\geq 2$. In particular, the assertion on the limit follows as the sequence
\begin{align*}
\Big( \max_{\substack{|T|=n\\T~\text{topological tree}}} \gamma(S,T)\Big)_{n\geq 1}
\end{align*}
is nonincreasing and bounded. Moreover, we get
\begin{align*}
J(S) \leq \max_{\substack{|T|=n\\T~\text{topological tree}}} \gamma(S,T)
\end{align*}
for all $n$.

\medskip
Now, denote by $T^{+}$ the tree obtained after replacing the leaf $l_2$ of $T$ by an internal vertex with two leaves $l_2$ and $l_2^{\prime}$ attached to it. So, $c(S,T)$ represents the number of copies of $S$ in $T^{+}$ not involving $l_2^{\prime}$ (removing the leaf $l_2^{\prime}$ yields $T$) whereas the number of copies of $S$ in $T^{+}$ involving $l_2^{\prime}$ is at least $c_{l_2}(S,T)$. Thus, it follows from relation~\eqref{InView} that
$$c(S,T^{+})\geq \Big(1+\frac{|S|}{|T|}\Big) c(S,T).$$
Dividing by $\binom{|T|+1}{|S|}$ yields
$$\gamma(S,T^{+}) \geq \Big( 1 - \frac{|S|(-1+|S|)}{|T|(|T|+1)} \Big) \gamma(S,T),$$
and since $T$ was assumed to be an arbitrary $n$-leaf topological tree, we have
$$
\max_{\substack{|T|={n+1} \\ T~\text{topological tree}}} \gamma(S,T) \geq  \Big( 1 - \frac{|S|(-1+|S|)}{n(n+1)} \Big)
\max_{\substack{|T|={n} \\ T~\text{topological tree}}} \gamma(S,T)
$$
for every $n$. After $p$ iterations, we establish that
\begin{align*}
\max_{\substack{|T|=n+p\\T~\text{topological tree}}} \gamma(S,T) \geq  \Big(\max_{\substack{|T|=n\\T~\text{topological tree}}}\gamma(S,T)\Big) \cdot \prod_{j=0}^{p-1}\Bigg(1- \frac{|S|(-1+|S|)}{(n+p-j)(n+p-j-1)}\Bigg)
\end{align*}
for all $n,p$ with $p\geq 1$.

Letting $p \to \infty$ gives us the estimate
\begin{align*}
J(S) &\geq \Big(\max_{\substack{|T|=n\\T~\text{topological tree}}} \gamma(S,T)\Big)  \cdot \Bigg(1-\sum_{i=0}^{\infty} \frac{|S|(-1+|S|) }{(n+i+1)(n+i)} \Bigg)\\
&=\Big( \max_{\substack{|T|=n\\T~\text{topological tree}}} \gamma(S,T)\Big) \cdot \Bigg(1-\frac{|S|(-1+|S|)}{n} \Bigg)
\end{align*}
for every $n$, where in the first step we used the standard inequality (which is proved by a simple induction) $\prod_{i=1}^m (1-y_i)\geq 1-\sum_{i=1}^m y_i$ (valid when $0\leq y_i\leq 1$ for every $i$ -- see~\cite[p. 60]{hardy1952inequalities}) giving us
\begin{align*}
\prod_{j=0}^{p-1}\Bigg(1- \frac{|S|(-1+|S|)}{(n+p-j)(n+p-j-1)}\Bigg) \geq 1-\sum_{j=0}^{p-1} \frac{|S|(-1+|S|) }{(n+p-j)(n+p-j-1)}
\end{align*}
(recall that $n\geq |S|$). Putting things together, we obtain the desired double inequality and in particular, the asymptotic formula 
\begin{align*}
\max_{\substack{|T|=n \\ T~\text{topological tree}}}  \gamma(S,T) =J(S) +\mathcal{O}(n^{-1})
\end{align*}
which is valid for all $n$. This completes the proof of the theorem.
\end{proof}

The problem of determining extremal graph structures that maximise or minimise a given graph parameter is a topical subject within graph theory. In what follows, we address the question of characterising all the topological trees that have the maximal inducibility $1$.

Unlike the degree-restricted inducibility $I_d(S)$, two families of topological trees are found to be maximal with respect to $J(S)$.

\medskip
By a \textit{binary caterpillar}, we mean a binary tree whose non-leaf vertices lie on a single path starting at the root. By a \textit{star}, we mean a topological tree in which all leaves are children of the root.

An illustration of both classes of trees is given in Figure~\ref{binaAndStars}.

\begin{figure}[htbp]\centering
\begin{subfigure}[b]{0.3\textwidth} \centering 
\begin{tikzpicture}[thick,level distance=18mm]
\tikzstyle{level 1}=[sibling distance=6mm]   
\node [circle,draw]{}
child {[fill] circle (2pt)}
child {[fill] circle (2pt)}
child {[fill] circle (2pt)}
child {[fill] circle (2pt)}
child {[fill] circle (2pt)}
child {[fill] circle (2pt)};
\end{tikzpicture}
\caption{The star with six leaves}
  \end{subfigure}\qquad \qquad
\begin{subfigure}[b]{0.4\textwidth} \centering  
\begin{tikzpicture}[thick,level distance=8mm]
\tikzstyle{level 1}=[sibling distance=12mm]   
\node [circle,draw]{}
child {[fill] circle (2pt)}
child {child {[fill] circle (2pt)}child {child {[fill] circle (2pt)}child {child {[fill] circle (2pt)}child {[fill] circle (2pt)}}}};
\end{tikzpicture}
\caption{The binary caterpillar with five leaves}
  \end{subfigure}
\caption{A star and a binary caterpillar}\label{binaAndStars}
\end{figure}
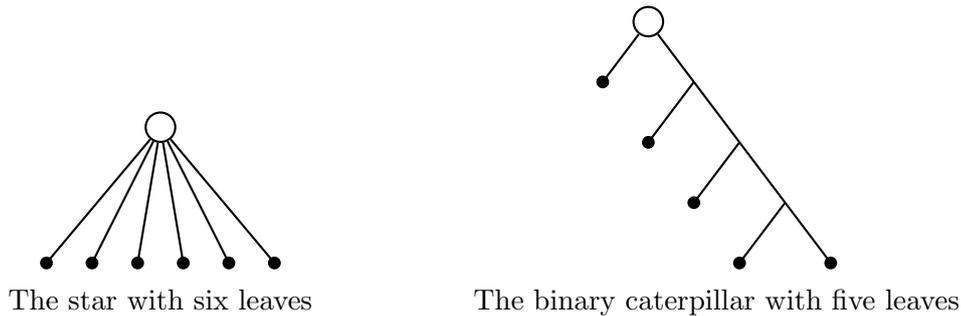

\begin{theorem}\label{MaxTrees}
Both stars and binary caterpillars have the maximal inducibility $1$. Moreover, every topological tree $S$ that is not a star or a binary caterpillar satisfies $J(S)<1$.
\end{theorem}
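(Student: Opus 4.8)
The plan is to reduce the entire statement to a single rigid combinatorial property of the host tree. By Theorem~\ref{RelationMaxJS} the sequence $a_n := \max_{|T|=n}\gamma(S,T)$ (the maximum taken over topological trees) is nonincreasing, bounded above by $1$, and satisfies $a_{|S|}=1$ since $\gamma(S,S)=1$; moreover $J(S)=\lim_n a_n$. Hence $J(S)=1$ if and only if $a_n=1$ for \emph{every} $n\ge |S|$, equivalently (the sequence being nonincreasing) for arbitrarily large $n$. Now $a_n=1$ says precisely that there is a topological tree $T$ on $n$ leaves \emph{all} of whose $|S|$-leaf subsets induce $S$; I will call such a $T$ an \emph{$S$-saturated} host. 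Thus the theorem becomes the assertion that $S$-saturated hosts of unbounded size exist if and only if $S$ is a star or a binary caterpillar, and I would prove the two implications through this reformulation.

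For the easy direction I would simply exhibit arbitrarily large saturated hosts. If $S$ is the star with $k$ leaves, then the star with $n\ge k$ leaves is $S$-saturated: any $k$ of its leaves have the root as their common ancestor and induce exactly the $k$-leaf star. If $S$ is the binary caterpillar with $k$ leaves, then the binary caterpillar with $n$ leaves is $S$-saturated, because every leaf-induced subtree of a caterpillar is again a caterpillar and the $k$-leaf caterpillar is unique. In both cases $a_n=1$ for all $n$, so $J(S)=1$.

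For the converse, suppose $S$ is neither a star nor a binary caterpillar, and let $d\ge 2$ be the largest outdegree occurring in $S$. The observation driving the argument is that in an $S$-saturated host $T$ every leaf-induced subtree of $T$ is also a leaf-induced subtree of $S$: any set of leaves of $T$ completes to an $|S|$-set, which induces $S$, so the original set induces one of $S$'s own leaf-induced subtrees. Since no vertex of $S$ has outdegree exceeding $d$, the tree $S$ contains no induced copy of the star on $d+1$ leaves; consequently neither does $T$, which forces every vertex of $T$ to have outdegree at most $d$. Hence $T$ is a $d$-ary tree. But $S$ is itself a $d$-ary tree that is not a binary caterpillar (if $d\ge 3$ it is not even binary, and if $d=2$ it is binary but not a caterpillar by hypothesis), so Theorem~\ref{max1dAry} gives $I_d(S)<1$, while Theorem~\ref{maxdensityId} yields $\gamma(S,T)\le I_d(S)+|S|(|S|-1)n^{-1}$ for $|T|=n$. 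For $n$ large this bound is strictly below $1$, contradicting $S$-saturation. Therefore $S$-saturated hosts have bounded size, so $a_n<1$ for large $n$ and $J(S)<1$.

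The crux is the passage from $S$-saturation to the degree bound on the host, which rests on the inheritance principle that the leaf-induced subtrees of a saturated host all occur already in the fixed, finite target $S$; this caps the attainable outdegrees and collapses the problem onto the already-solved degree-restricted setting. The two bookkeeping points I would verify to make this rigorous are that $|S|\ge d+1$ (so a $(d+1)$-leaf star appearing in $T$ can indeed be completed to an $|S|$-set), which holds because $S$ is not a star and carries a vertex of outdegree $d$, and that a vertex of outdegree at least $d+1$ genuinely induces a $(d+1)$-leaf star, seen by selecting one leaf from each of $d+1$ distinct branches. Everything else is a direct appeal to Theorems~\ref{maxdensityId} and~\ref{max1dAry}.
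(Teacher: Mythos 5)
Your proof is correct, but it takes a genuinely different route from the paper's. The paper proves a self-contained Ramsey-type dichotomy: for $k>2$, any topological tree on more than $(k-1)^{k-2}$ leaves contains $C_k$ or $F^2_k$ as a leaf-induced subtree (a vertex of outdegree $\geq k$ yields the star, height $\geq k-1$ yields the caterpillar, and otherwise the tree is $(k-1)$-ary of height at most $k-2$, hence has at most $(k-1)^{k-2}$ leaves), so that if $S$ is neither, then $\gamma(S,T)<1$ for every $T$ with $n>(k-1)^{k-2}$ leaves, and $J(S)\leq \max_{|T|=n}\gamma(S,T)<1$. You instead reformulate $J(S)=1$ as the existence of arbitrarily large $S$-saturated hosts and use an inheritance principle (every small leaf-induced subtree of a saturated host is a leaf-induced subtree of $S$, by transitivity of leaf-induction, which the paper itself uses implicitly in Theorem~\ref{RST}) to cap the host's outdegrees at $d=\Delta(S)$; this collapses the problem onto the degree-restricted setting, where you invoke Theorems~\ref{max1dAry} and~\ref{maxdensityId} as black boxes. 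In effect you only re-derive the ``star half'' of the paper's dichotomy (your $(d+1)$-star extraction, with the correct check that $|S|\geq d+1$ since $S$ is not a star) and outsource the ``caterpillar half'' to the cited $d$-ary theorem. Your reduction buys brevity and a conceptually clean bridge between $J$ and $I_d$ (in the spirit of Theorem~\ref{JSlimId}); the paper's argument buys self-containedness and, more importantly, the explicit quantitative threshold $(k-1)^{k-2}$, which is reused verbatim to obtain the explicit lower bound in Proposition~\ref{infTopol} --- your size bound on saturated hosts, roughly $n\leq |S|(|S|-1)/(1-I_d(S))$, depends on the unknown quantity $I_d(S)$ and would not yield that constant. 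One small presentational point: the monotonicity of $n\mapsto\max_{|T|=n}\gamma(S,T)$ is established in the \emph{proof} of Theorem~\ref{RelationMaxJS} rather than its statement, though the stated double inequality $J(S)\leq\max_{|T|=n}\gamma(S,T)$ already suffices for your equivalence, since the maximum over the finitely many $n$-leaf topological trees is attained.
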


We remark that Theorem~\ref{MaxTrees} has a link to an observation made by Bubeck and Linial in \cite{bubeck2016local}. In their context, the inducibility $\text{ind}(R)$ of a tree $R$ (not necessarily a topological tree) with $k$ vertices is defined to be the limit superior of the proportion of $R$ as a subtree among all $k$-vertex induced subtrees where the limit is taken over all sequences of trees whose number of vertices grows to infinity.

In the last section of their paper, Bubeck and Linial point out that stars and paths are the only trees $R$ that have an inducibility $\text{ind}(R)$ of $1$.

\begin{proof}[Proof of Theorem~\ref{MaxTrees}]
First off, note that we have both $J(C_k)=1$ and $J(F^2_k)=1$ for every $k$ in view of the identity
\begin{align*}
 c(C_k,C_n)=c(F^2_k,F^2_n)=\binom{n}{k}\,,
\end{align*}
i.e., every subset of $k$ leaves of the star $C_n$ induces $C_k$, and every subset of $k$ leaves of the binary caterpillar $F^2_n$ induces $F^2_k$.

\medskip
The rest of the proof is a refinement of the proof of Theorem~\ref{max1dAry}. To begin, let us prove that for any fixed positive integers $k>2$ and $n>(k-1)^{k-2}$, every $n$-leaf topological tree contains either a copy of the $k$-leaf star $C_k$ or a copy of the $k$-leaf binary caterpillar $F^2_k$.

Note that a topological tree has $C_k$ as a leaf-induced subtree if and only if it contains a vertex that has at least $k$ children. Likewise, a topological tree contains $F^2_k$ as a leaf-induced subtree if and only if it has height at least $k-1$.

Fix $k>2$, $n>(k-1)^{k-2}$, and consider a topological tree $T$ with $n$ leaves. If we suppose that $T$ is a $(k-1)$-ary tree and $T$ has height at most $k-2$, then $T$ must have at most $(k-1)^{k-2}$ leaves because it is easy to see that for any fixed integers $h\geq 0$ and $m\geq 2$, an $m$-ary ($m\geq 2$) tree with height at most $h$ can never have more than $m^h$ leaves. This contradicts our assumption that $|T|=n>(k-1)^{k-2}$.

Therefore, for $k>2$ and $n>(k-1)^{k-2}$, every $n$-leaf topological tree must contain either a copy of the star $C_k$ or a copy of the binary caterpillar $F^2_k$.

\medskip
As a next step, if $S$ is neither $F^2_{|S|}$ nor $C_{|S|}$, then by the above discussion, we obtain $c(S,T)<\binom{|T|}{|S|}$ for every topological tree with $n$ leaves as soon as $n>(k-1)^{k-2}$. Therefore, fixing $n>(k-1)^{k-2}$, we get
\begin{align*}
\max_{\substack{|T|=n\\ T~\text{topological tree}}} \gamma(S,T) <1\,.
\end{align*}

Furthermore, we know from the proof of Theorem~\ref{RelationMaxJS} that
\begin{align*}
J(S)\leq \max_{\substack{|T|=n\\ T~\text{topological tree}}} \gamma(S,T)
\end{align*}
for every $n$: this finishes the proof of the theorem.
\end{proof}

In particular, one obtains:
\begin{corollary}
We have
\begin{align*}
\liminf_{\substack{|T| \to \infty \\ T~\text{topological tree}}} \gamma(S,T) =0
\end{align*}
for every topological tree $S$ with at least three leaves.
\end{corollary}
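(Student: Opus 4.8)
The plan is to exploit the two explicit families already singled out in the proof of Theorem~\ref{MaxTrees}, namely the stars $C_n$ and the binary caterpillars $F^2_n$, and to show that for any prescribed $S$ at least one of these families witnesses density exactly $0$ along an infinite sequence of sizes. Since $\gamma(S,T)\geq 0$ by definition, exhibiting a single sequence of topological trees whose number of leaves tends to infinity and along which $\gamma(S,\cdot)$ vanishes identically is enough to force the limit inferior down to $0$.

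First I would recall the identities $c(C_k,C_n)=c(F^2_k,F^2_n)=\binom{n}{k}$ established in the proof of Theorem~\ref{MaxTrees}. Because the star $C_n$ has exactly $\binom{n}{k}$ subsets of $k$ leaves and every one of them induces $C_k$, no $k$-subset of leaves of $C_n$ can induce any other topological tree; hence $c(S,C_n)=0$ whenever $|S|=k$ and $S\neq C_k$. The same reasoning applied to $F^2_n$ gives $c(S,F^2_n)=0$ whenever $|S|=k$ and $S\neq F^2_k$.

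Next I would observe that for $k\geq 3$ the star $C_k$ and the binary caterpillar $F^2_k$ are distinct topological trees: the former has a vertex of outdegree $k$, whereas the latter has height $k-1$ and maximum outdegree $2$. Consequently, a topological tree $S$ with $|S|=k\geq 3$ can coincide with at most one of $C_k$ and $F^2_k$. If $S\neq C_k$, then $\gamma(S,C_n)=0$ for every $n\geq k$; if instead $S\neq F^2_k$, then $\gamma(S,F^2_n)=0$ for every $n\geq k$. In either situation we obtain an infinite family of topological trees whose number of leaves tends to infinity and on which the density of $S$ is identically $0$.

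Putting this together, the limit inferior of $\gamma(S,T)$ over topological trees $T$ is at most the limiting value of $\gamma(S,\cdot)$ along the chosen family, which is $0$; combined with the trivial bound $\gamma(S,T)\geq 0$ this yields the claimed equality. I do not expect a genuine obstacle here: the only points requiring care are the bookkeeping that ``all $\binom{n}{k}$ subsets induce the same tree'' forces every competing count to vanish, and the observation (needed to cover the case where $S$ is itself a star) that $C_k$ and $F^2_k$ differ precisely when $k\geq 3$, which is exactly why the hypothesis $|S|\geq 3$ is indispensable.
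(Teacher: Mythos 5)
Your proposal is correct and takes essentially the same route as the paper: both arguments rest on the identity $c(C_k,C_n)=c(F^2_k,F^2_n)=\binom{n}{k}$ (so that every $k$-subset of leaves of a star or binary caterpillar induces $C_k$ or $F^2_k$ respectively, forcing all competing counts to vanish) together with the observation that a tree with at least three leaves cannot be both a star and a binary caterpillar. The paper merely packages this via the bounds $\liminf \gamma(S,T)\leq 1-J(F^2_{|S|})$ and $\liminf \gamma(S,T)\leq 1-J(C_{|S|})$, while you unwind the same content into explicit witnessing sequences along which $\gamma(S,\cdot)$ is identically zero --- a cosmetic rather than substantive difference.
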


\begin{proof}
This is a consequence of the identities $J(C_k)=1$ and $J(F^2_k)=1$ because
\begin{align*}
\liminf_{\substack{|T| \to \infty \\ T~\text{topological tree}}} \gamma(S,T) \leq \liminf_{\substack{|T| \to \infty \\ T~\text{topological tree}}} \big(1- \gamma(F^2_{|S|},T)  \big) =1 - J(F^2_{|S|})
\end{align*}
for every topological tree $S$ different from a binary caterpillar, and likewise
\begin{align*}
\liminf_{\substack{|T| \to \infty \\ T~\text{topological tree}}} \gamma(S,T) \leq \liminf_{\substack{|T| \to \infty \\ T~\text{topological tree}}} \big(1- \gamma(C_{|S|},T) \big)=1-J(C_{|S|})
\end{align*}
for every topological tree $S$ different from a star.
\end{proof}

As one might expect (in view of the aforementioned analogy to the work~\cite{bubeck2016local}), the quantity
\begin{align*}
\liminf_{\substack{|T| \to \infty \\ T~\text{topological tree}}}\big(\gamma(C_k,T) +\gamma(F^2_k,T)\big)
\end{align*}
is positive for every $k$. It seems arduous to determine its explicit value as a function of $k$. Nevertheless, we are able to say more about this quantity.

\medskip
In fact, Bubeck and Linial~\cite{bubeck2016local} proved that in the context of induced subtrees of trees, the sum of the proportions of the $k$-vertex path and the $k$-vertex star is always greater than zero in the limit for every $k$. Specifically, they showed that the sum of the two proportions is bounded from below by an explicit constant that depends solely on $k$.

\begin{proposition}\label{infTopol}
For every fixed positive integer $k \geq 2$, the inequalities
\begin{align*}
\liminf_{\substack{|T| \to \infty \\ T~\text{topological tree}}}\big(\gamma(C_k,T) +\gamma(F^2_k,T)\big) \geq \frac{1}{\dbinom{1+(k-1)^{k-2}}{k}} \geq \frac{k!}{\Big( 1+(k-1)^{k-2}\Big)^k}
\end{align*}
are satisfied.
\end{proposition}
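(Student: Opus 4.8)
The plan is to run a double-counting (averaging) argument anchored on the threshold $N := 1+(k-1)^{k-2}$ that already surfaced in the proof of Theorem~\ref{MaxTrees}. The starting observation is precisely the combinatorial fact extracted there: since $N>(k-1)^{k-2}$, every topological tree with exactly $N$ leaves must contain a copy of $C_k$ or a copy of $F^2_k$ (for $k=2$ this is trivial, as $C_2=F^2_2$ is the unique two-leaf tree). The goal is to transfer this local guarantee to an arbitrary large tree $T$ by scanning all $N$-element subsets of its leaves.

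The key preliminary step is the transitivity of the leaf-induction operation: for leaf sets $B\subseteq A\subseteq L(T)$, the subtree of $T$ induced by $B$ coincides, as a rooted tree, with the subtree induced by $B$ inside the subtree induced by $A$. I would justify this by recalling that the root of the subtree induced by $B$ is the lowest common ancestor $v$ of $B$; because $B\subseteq A$, the vertex $v$ already has at least two children carrying $A$-descendants, so it survives the suppression of outdegree-$1$ vertices used to build the induced subtree on $A$, and ancestry among retained vertices is preserved by that suppression. Consequently, given any $N$-subset $A$ of $L(T)$, the induced tree on $A$ is itself an $N$-leaf topological tree, hence it contains a $k$-subset $B\subseteq A$ inducing $C_k$ or $F^2_k$; by transitivity this same $B$ induces $C_k$ or $F^2_k$ in $T$ itself.

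With this in hand I would count pairs $(A,B)$ with $|A|=N$, $B\subseteq A$, $|B|=k$, and $B$ inducing $C_k$ or $F^2_k$. Each of the $\binom{|T|}{N}$ choices of $A$ supplies at least one admissible $B$, while each of the $c(C_k,T)+c(F^2_k,T)$ admissible $k$-subsets lies in exactly $\binom{|T|-k}{N-k}$ sets $A$. Comparing the two counts gives
\begin{align*}
c(C_k,T)+c(F^2_k,T)\geq \frac{\binom{|T|}{N}}{\binom{|T|-k}{N-k}}=\frac{\binom{|T|}{k}}{\binom{N}{k}},
\end{align*}
and dividing by $\binom{|T|}{k}$ yields $\gamma(C_k,T)+\gamma(F^2_k,T)\geq\binom{N}{k}^{-1}$ for every topological tree $T$ with $|T|\geq N$. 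Passing to the limit inferior produces the first stated inequality, and the second follows in one line from the elementary estimate $\binom{N}{k}\leq N^k/k!$.

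I expect the only genuinely delicate point to be the transitivity step, namely ensuring that a copy of $C_k$ or $F^2_k$ detected inside the subtree induced by $A$ is a bona fide copy inside $T$ with the correct root; once that is settled, the double counting and the closing arithmetic are entirely routine.
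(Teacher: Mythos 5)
Your proof is correct, but it takes a genuinely different route from the paper's. The paper reuses the leaf-deletion averaging argument from Theorem~\ref{RelationMaxJS}: it picks a leaf $l_1$ carrying at least the average share of copies, deletes it, and concludes that the sequence $\bigl(\min_{|T|=n}(\gamma(C_k,T)+\gamma(F^2_k,T))\bigr)_{n\geq k}$ is nondecreasing; it then anchors this monotone sequence at $n=N:=1+(k-1)^{k-2}$, where the fact from the proof of Theorem~\ref{MaxTrees} guarantees at least one copy of $C_k$ or $F^2_k$, i.e.\ $\gamma(C_k,T)+\gamma(F^2_k,T)\geq \binom{N}{k}^{-1}$ at that size. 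You bypass the deletion/monotonicity machinery entirely and instead transfer the size-$N$ guarantee to every large tree by a supersaturation-style double count of pairs $(A,B)$ over all $N$-subsets $A$ of leaves; the identity $\binom{n}{N}\big/\binom{n-k}{N-k}=\binom{n}{k}\big/\binom{N}{k}$ is correct, and the transitivity of leaf-induction that you rightly single out as the delicate point is sound as you argue it (every branchpoint of $B$ is a branchpoint of $A$, lowest common ancestors of $B$-leaves computed in the $A$-induced tree agree with those in $T$, and ancestry survives the suppression of outdegree-$1$ vertices) --- indeed this fact is used implicitly elsewhere in the paper, e.g.\ in the proof of Theorem~\ref{RST}. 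Your route buys a strictly stronger, pointwise conclusion: $\gamma(C_k,T)+\gamma(F^2_k,T)\geq\binom{N}{k}^{-1}$ for \emph{every} topological tree with at least $N$ leaves, not merely in the limit inferior; the paper's route buys instead the monotonicity of the minimum density in $n$, which would in principle allow the lower bound to be sharpened by computing the exact minimum at any fixed size beyond $N$. Both arguments rest on the same combinatorial input from Theorem~\ref{MaxTrees}, and your treatment of the trivial case $k=2$ and the closing estimate $\binom{N}{k}\leq N^k/k!$ is fine.
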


\begin{proof}
Let $k\geq 2$ be an arbitrary but fixed positive integer. In analogy to the proof of Theorem~\ref{RelationMaxJS}, we have
\begin{align*}
\sum_{l \in L(T)}  \big(c_l(F^2_k,T)+c_l(C_k,T) \big) =k\big( c(F^2_k,T)+c(C_k,T)\big)
\end{align*}
for every topological tree $T$. So there exists $l_1 \in L(T)$ such that the inequality
\begin{align*}
c_{l_1}(F^2_k,T)+c_{l_1}(C_k,T) \geq \frac{k}{|T|} \cdot \big( c(F^2_k,T)+c(C_k,T)\big)
\end{align*}
holds. If we denote by $T^{-}$ the topological tree obtained when removing the leaf $l_1$ of $T$ as well as erasing the unique vertex adjacent to $l_1$ in $T$ if it has outdegree $2$ in $T$, then this implies that
\begin{align*}
c(F^2_k,T^{-})+c(C_k,T^{-}) &=\big( c(F^2_k,T)+c(C_k,T)\big)- \big( c_{l_1}(F^2_k,T)+c_{l_1}(C_k,T)\big)\\
& \leq \Big(1-\frac{k}{|T|} \Big)\cdot \big( c(F^2_k,T)+c(C_k,T)\big),
\end{align*}
and dividing by $\binom{|T|-1}{k}$ yields
$$\gamma(F^2_k,T^{-})+\gamma(C_k,T^{-}) \leq \gamma(F^2_k,T)+\gamma(C_k,T)$$
for every topological tree $T$ with $n\geq 2$ leaves.
\medskip
Since $T$ is arbitrary, it follows that
$$\min_{\substack{|T|=n-1\\ T~\text{topological tree}}} (\gamma(F^2_k,T)+\gamma(C_k,T)) \leq \min_{\substack{|T|=n\\ T~\text{topological tree}}} (\gamma(F^2_k,T)+\gamma(C_k,T)),$$
so the sequence 
\begin{align*}
\Bigg(\min_{\substack{|T|=n\\ T~\text{topological tree}}}\big(\gamma(C_k,T) +\gamma(F^2_k,T) \big)\Bigg)_{n\geq k}
\end{align*}
is nondecreasing. Using this result, we derive that
\begin{align*}
\min_{\substack{|T|=n\\ T~\text{topological tree}}}\Big(\gamma(C_k,T) +\gamma(F^2_k,T) \Big) \geq \min_{\substack{|T|=1+(k-1)^{k-2}\\ T~\text{topological tree}}}\Big(\gamma(C_k,T) +\gamma(F^2_k,T) \Big) 
\end{align*}
for every $n>(k-1)^{k-2}$. 

It follows that
\begin{align*}
\liminf_{\substack{|T| \to \infty \\ T~\text{topological tree}}}\Big(\gamma(C_k,T) +\gamma\big(F^2_k,T\big)\Big) \geq \min_{\substack{|T|=1+(k-1)^{k-2}\\ T~\text{topological tree}}}\Big(\gamma(C_k,T) +\gamma(F^2_k,T)\Big) \,.
\end{align*}

Furthermore, we know from the proof of Theorem~\ref{MaxTrees} that
\begin{align*}
\gamma(C_k,T) +\gamma(F^2_k,T) \geq \frac{1}{\dbinom{|T|}{k}}
\end{align*}
as soon as $|T|> (k-1)^{k-2}$. Hence, we obtain
\begin{align*}
\liminf_{\substack{|T| \to \infty \\ T~\text{topological tree}}}\big(\gamma(C_k,T) +\gamma(F^2_k,T)\big)\geq \frac{1}{\dbinom{1+(k-1)^{k-2}}{k}}\,.
\end{align*}
Consequently, this finishes the proof of the proposition.
\end{proof}

We would very much welcome seeing a solution to the following question:

\begin{question}\label{QuesInfTopo}
What is the precise value of
\begin{align*}
\liminf_{\substack{|T| \to \infty \\ T~\text{topological tree}}}\big(\gamma(C_k,T) +\gamma(F^2_k,T)\big)
\end{align*}
when $k>3$?
\end{question}

Our next result gives a simple identity involving $J(S)$ and its $d$-ary counterpart $I_d(S)$. Specifically, although using equation~\eqref{formuJS} can be very difficult, we discover that to compute the inducibility $J(S)$, it is enough to properly understand the inducibility $I_d(S)$ as a single variable function of $d$.

\begin{theorem}\label{JSlimId}
We have 
\begin{align*}
J(S)= \lim_{d \to \infty} I_d(S)
\end{align*}
for every topological tree $S$.
\end{theorem}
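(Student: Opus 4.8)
The plan is to avoid the tempting but delicate route of directly converting a near-optimal topological tree into a $d$-ary one by truncating high-degree vertices, and instead to combine the two quantitative estimates already at our disposal (Theorem~\ref{RelationMaxJS} and Theorem~\ref{maxdensityId}) with one elementary structural remark. To fix notation, write $M(n)=\max_{|T|=n,\,T\text{ top.}}\gamma(S,T)$ for the maximum density over $n$-leaf topological trees and $M_d(n)=\max_{|T|=n,\,T\text{ $d$-ary}}\gamma(S,T)$ for its $d$-ary analogue. Since every $d$-ary tree is a topological tree, $M_d(n)\le M(n)$, whence $I_d(S)=\lim_n M_d(n)\le \lim_n M(n)=J(S)$ for all $d$ large enough that $S$ is a $d$-ary tree. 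Because $I_d(S)$ is non-decreasing in $d$ (as recorded in the introduction), the limit $L:=\lim_{d\to\infty}I_d(S)$ exists and satisfies $L\le J(S)$; the entire task is the reverse inequality $L\ge J(S)$.

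The structural remark I would use is that an $n$-leaf topological tree can have no vertex of outdegree exceeding $n$, since the subtrees hanging off the children are disjoint and each contains at least one of the $n$ leaves. Consequently, as soon as $d\ge n$ every $n$-leaf topological tree is already a $d$-ary tree, so the two maxima coincide: $M_d(n)=M(n)$ for all $d\ge n$, and in particular $M_n(n)=M(n)$. This is the observation that lets the two families talk to each other at the level of finite trees.

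The key step is then a diagonal choice $d=n$. First I would invoke Theorem~\ref{RelationMaxJS} in its trivial half, $M(n)\ge J(S)$ for $n\ge|S|$. Next I would apply Theorem~\ref{maxdensityId}, whose crucial feature is that the error term $|S|(|S|-1)n^{-1}$ is \emph{uniform in} $d$; specialising to $d=n$ gives $M_n(n)\le I_n(S)+|S|(|S|-1)n^{-1}$. Chaining these with $M_n(n)=M(n)$ yields $J(S)\le M(n)=M_n(n)\le I_n(S)+|S|(|S|-1)n^{-1}$, that is $I_n(S)\ge J(S)-|S|(|S|-1)n^{-1}$. Letting $n\to\infty$ and using that $I_n(S)\to L$ (the sequence $(I_n(S))_n$ is a subsequence of the monotone convergent family $(I_d(S))_d$) gives $L\ge J(S)$, which together with $L\le J(S)$ proves the claim.

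I would emphasise that the real obstacle here is conceptual rather than computational. The double-indexed quantity $M_d(n)$ is non-increasing in $n$ and non-decreasing in $d$, and $J(S)=\lim_n\lim_d M_d(n)$ while $\lim_d I_d(S)=\lim_d\lim_n M_d(n)$; but monotonicity alone never justifies exchanging iterated limits — the toy array $M_d(n)=\mathbf 1_{\{d\ge n\}}$ has the same monotonicity yet iterated limits $0$ and $1$. What rescues the argument is exactly the $d$-independent rate supplied by Theorem~\ref{maxdensityId}, which makes the diagonalisation $d=n$ effective. So the non-trivial content is to recognise that this uniform bound is available and to use it, rather than to construct an explicit degree-truncation map, which would be awkward precisely because collapsing a high-degree vertex generally destroys the star-type branchings that copies of $S$ may need.
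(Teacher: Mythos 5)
Your proof is correct and follows essentially the same route as the paper: the paper's argument is precisely this diagonalisation (there phrased with an $\epsilon$ and $m_\epsilon>|S|(|S|-1)/\epsilon$ in place of your limit $n\to\infty$), combining $J(S)\leq M(n)$ from Theorem~\ref{RelationMaxJS} with the $d$-uniform error bound of Theorem~\ref{maxdensityId} at $d=n$, plus monotonicity of $I_d(S)$ for the easy inequality. The only difference is presentational: you state explicitly the structural fact that an $n$-leaf topological tree has all outdegrees at most $n$, hence $M_d(n)=M(n)$ for $d\geq n$, which the paper uses implicitly when it equates the topological and $m_\epsilon$-ary maxima in its inequality~\eqref{IneBecom}.
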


\begin{proof}
Let $\epsilon > 0 $ be an arbitrary but fixed positive real number. We shall make use of the two estimates
\begin{equation}\label{J_upper_est}
J(S) \leq \max_{\substack{|T|=n \\ T~\text{topological tree}}}  \gamma(S,T)
\end{equation}
and
\begin{align*}
I_d(S) \geq \Bigg(1- \frac{|S|(-1+|S|)}{n}\Bigg)\max_{\substack{|T|=n \\ T~d-\text{ary tree}}}  \gamma(S,T)
\end{align*}
both valid for every $n$. They follow from the proof of Theorem~\ref{RelationMaxJS} and Theorem~\ref{maxdensityId}, respectively.

For any positive integer $\displaystyle m_{\epsilon} > |S|(-1+|S|)/\epsilon $, we have
\begin{align*}
\frac{|S|(-1+|S|)}{m_{\epsilon}} \cdot  \max_{\substack{|T|=m_{\epsilon} \\ T~m_{\epsilon}-\text{ary tree}}} \gamma(S,T) & < \epsilon\,,
\end{align*}
which implies
\begin{align}\label{IneBecom}
\max_{\substack{|T|=m_{\epsilon} \\ T~\text{topological tree}}} \gamma(S,T) -\Bigg(1- \frac{|S|(-1+|S|)}{m_{\epsilon}}\Bigg)\max_{\substack{|T|=m_{\epsilon} \\ T~m_{\epsilon}-\text{ary tree}}} \gamma(S,T) & < \epsilon\,.
\end{align}

Employing the relation
\begin{align*}
I_{m_{\epsilon}}(S) \geq \Bigg(1- \frac{|S|(-1+|S|)}{m_{\epsilon}}\Bigg)\max_{\substack{|T|=m_{\epsilon} \\ T~m_{\epsilon}-\text{ary tree}}}  \gamma(S,T)\,,
\end{align*}
and invoking the fact that $\big(I_m(S)\big)_{m\geq 2}$ is a nondecreasing sequence of real numbers, inequality~\eqref{IneBecom} implies
\begin{align*}
\Big(\max_{\substack{|T|=m_{\epsilon} \\ T~\text{topological tree}}} \gamma(S,T) \Big) - I_m(S)  < \epsilon\,,
\end{align*}
for every $m \geq m_{\epsilon}$.

\medskip
Combining this with~\eqref{J_upper_est}, we establish that
\begin{align*}
J(S) -I_m(S)  < \epsilon
\end{align*}
for every $m \geq m_{\epsilon}$. On the other hand, we have
\begin{align*}
J(S) \geq \sup_{d\geq 2} I_d(S) = \lim_{d\to \infty} I_d(S)
\end{align*}
by definition of $J(S)$. Hence, we conclude that for every $\epsilon >0$, there exists $\Delta \geq 2$ such that the inequality
\begin{align*}
|J(S) -I_d(S)|  < \epsilon 
\end{align*}
holds for every $d>\Delta$. This completes the proof of the theorem.
\end{proof}

\begin{remark}
For a topological tree $T$, let us denote by $\displaystyle \Delta(T)$ the maximum among the outdegrees of the vertices of $T$. Let $T_1^{\bullet},T_2^{\bullet},\ldots$ be a sequence of topological trees such that $|T_n^{\bullet}| \to \infty$ as $n \to \infty$ and 
\begin{align*}
J(S)=\lim_{n \to \infty} \gamma(S,T_n^{\bullet})\,.
\end{align*}

If $\displaystyle \max_{i\geq 1}\Delta(T_i^{\bullet})$ exists, then the sequence $\big(I_d(S)\big)_{d\geq 2}$ becomes constant from the point $\displaystyle \Delta=\max_{i\geq 1}\Delta(T_i^{\bullet})$ onwards, that is,
\begin{align*}
I_{\Delta}(S)=I_{1+\Delta}(S)=I_{2+\Delta}(S)=\cdots \,.
\end{align*}
\end{remark}

\medskip
As already mentioned earlier, Theorem~\ref{JSlimId} can be very useful. The following corollary reveals one of its features:

\begin{corollary}\label{corIBJB}
We have
\begin{align*}
J(B)=I_2(B) 
\end{align*}
for every binary tree $B$.
\end{corollary}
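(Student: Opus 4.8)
The plan is to deduce the identity from Theorem~\ref{JSlimId} by showing that the sequence $\big(I_d(B)\big)_{d\ge 2}$ is in fact constant and equal to $I_2(B)$. Since binary trees form a subclass of the $d$-ary trees, we trivially have $I_2(B)\le I_d(B)$, so by Theorem~\ref{JSlimId} it suffices to establish the reverse inequality $I_d(B)\le I_2(B)$ for every $d\ge 2$. By Theorem~\ref{maxdensityId} (applied both for general $d$ and for $d=2$), this reduces to a statement about finite trees: for each $n$, the maximum of $\gamma(B,T)$ over $n$-leaf $d$-ary trees equals the maximum over $n$-leaf binary trees. As $|T'|=|T|$ makes $\gamma(B,T')$ and $c(B,T')$ proportional, everything comes down to a \emph{binarisation} lemma, which is where the real work lies.

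The lemma I would prove is: every $d$-ary tree $T$ admits a binary tree $T'$ with $|T'|=|T|$ such that $c(S,T')\ge c(S,T)$ for \emph{every} binary tree $S$ simultaneously. I would argue by induction on $|T|$. Writing the root's subtrees as $T_1,\dots,T_m$ and the root-branches of a binary $S$ as $S_L,S_R$, the copies of $S$ split into those lying inside a single $T_i$ and those rooted at the root; since $S$ is binary the latter use exactly two of the $T_i$, and their number is a polynomial with nonnegative coefficients in the quantities $c(S_L,T_i)$ and $c(S_R,T_i)$. Consequently, replacing each $T_i$ by its inductively obtained binarisation cannot decrease any $c(S,\cdot)$, and it remains to binarise an $m$-ary root carrying binary subtrees.

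The heart of the argument, and the step I expect to be the main obstacle, is a single \emph{peeling} move at the root: replace the $m$-ary root $v$ (subtrees $T_1,\dots,T_m$) by a binary root $v_1$ whose children are $T_1$ and a new vertex $u$ of outdegree $m-1$ carrying $T_2,\dots,T_m$; call the subtree rooted at $u$ by $T_u$ and the whole modified tree $\widehat T$. I would verify directly that this move does not decrease $c(S,\cdot)$ for any binary $S$. Using $c(S_R,T_u)\ge\sum_{i\ge 2}c(S_R,T_i)$ and $c(S_L,T_u)\ge\sum_{i\ge 2}c(S_L,T_i)$ — where the slack is precisely the copies now rooted at $u$, hence nonnegative — a short computation shows that the copies of $S$ rooted at $v_1$ together with those rooted at $u$ recover exactly the copies of $S$ formerly rooted at $v$, plus a nonnegative surplus. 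This bookkeeping must be carried out separately in the cases $S_L\ne S_R$ and $S_L=S_R$, but both yield $c(S,\widehat T)\ge c(S,T)$. Since $T_u$ and $T_1$ each have strictly fewer leaves than $T$, applying the induction hypothesis to them and reassembling at the binary root $v_1$ (again invoking the nonnegativity of the root-copy coefficients) produces a genuinely binary $T'$ with $c(S,T')\ge c(S,T)$ for all binary $S$, in particular for $S=B$.

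Putting the pieces together, the binarisation lemma gives $\max_{|T|=n,\,d\text{-ary}}\gamma(B,T)=\max_{|T|=n,\,\text{binary}}\gamma(B,T)$ for every $n$, whence $I_d(B)=I_2(B)$ for all $d$ by Theorem~\ref{maxdensityId}; Theorem~\ref{JSlimId} then yields $J(B)=\lim_{d\to\infty}I_d(B)=I_2(B)$. The one delicate point throughout is the sign and surplus accounting in the peeling move; once that inequality is secured the remaining reductions are routine.
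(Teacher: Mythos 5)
Your proof is correct, and its overall skeleton coincides with the paper's: both deduce the corollary from Theorem~\ref{JSlimId} by showing that the sequence $\big(I_d(B)\big)_{d\geq 2}$ is constant. The difference lies in how that constancy is obtained. The paper's proof is a one-line citation: the identity $I_d(B)=I_2(B)$ for every $d$ and every binary tree $B$ is simply quoted from \cite{AudaceStephanPaper1}, after which Theorem~\ref{JSlimId} gives $J(B)=\lim_{d\to\infty}I_d(B)=I_2(B)$. You instead prove the quoted ingredient from scratch via your binarisation lemma, and the argument checks out: in the peeling move, each copy of $S$ rooted at the old root $v$ that meets $T_1$ is matched by a copy rooted at $v_1$, using $c(S_X,T_u)\geq\sum_{i\geq 2}c(S_X,T_i)$ for $X\in\{L,R\}$; copies rooted at $v$ avoiding $T_1$ reappear rooted at $u$; copies inside a single $T_i$ are untouched; and the case $S_L=S_R$ changes only the bookkeeping, not the sign of the surplus. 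The recursion is well founded since $|T_1|,|T_u|<|T|$, the reassembly step is legitimate precisely because your lemma is stated for all binary $S$ simultaneously (so it applies to the branches $S_L,S_R$ as well as to $S$ itself), and the tree stays topological throughout since $u$ has outdegree $m-1\geq 2$. What your route buys is self-containedness: the corollary no longer rests on an external result, and in substance you have reconstructed the content of the fact cited from \cite{AudaceStephanPaper1}. One cosmetic remark: the appeal to Theorem~\ref{maxdensityId} is dispensable, since $I_d(B)$ is by definition the limit of the $n$-leaf maxima, so the equality of those maxima for every $n$ (binary trees being $d$-ary, plus your lemma for the reverse direction) already yields $I_d(B)=I_2(B)$.
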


\begin{proof}
It is shown in \cite{AudaceStephanPaper1} that $I_d(B)=I_2(B)$ for every $d$ and binary tree $B$. Therefore, by Theorem~\ref{JSlimId}, we obtain $J(B)=I_2(B)$.
 \end{proof}
 
\medskip
For the next corollary, we first need to recall the definition of a complete $d$-ary tree.

The \textit{complete $d$-ary tree} $CD^d_h$ of height $h$ is the $d$-ary tree defined recursively in the following way:
\begin{itemize}
\item $CD^d_0$ is the single leaf; 
\item for $h>0$, $CD^d_h$ has $d$ branches isomorphic to the tree $CD^d_{h-1}$.
\end{itemize}

\begin{corollary}
There are infinitely many topological trees $S$ with $J(S) \leq \epsilon$ for every $\epsilon >0$.
\end{corollary}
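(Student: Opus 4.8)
The plan is to prove the sharper statement that $\min\{J(S):|S|=k\}\to 0$ as $k\to\infty$. Selecting, for each $k$, a minimiser $S_k$ then produces infinitely many \emph{distinct} topological trees (they have pairwise different leaf numbers) whose inducibilities tend to $0$; for any prescribed $\epsilon>0$ all but finitely many of the $S_k$ satisfy $J(S_k)\le\epsilon$, which is exactly the assertion. The whole argument rests on a global averaging identity rather than on any explicit construction.

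The starting point is the elementary identity
\begin{equation*}
\sum_{\substack{|S|=k\\ S~\text{topological tree}}}\gamma(S,T)=1,
\end{equation*}
valid for every topological tree $T$ with $|T|\ge k$: each of the $\binom{|T|}{k}$ subsets of $k$ leaves of $T$ induces exactly one isomorphism type of $k$-leaf topological tree, so $\sum_{|S|=k}c(S,T)=\binom{|T|}{k}$. The number of summands is the count $p(k)$ of $k$-leaf topological trees (the sequence A000669), and crucially $p(k)\to\infty$.

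Next I would convert this global identity into a lower bound for the sum of the inducibilities. Fix $n\ge k$ and any topological tree $T'$ with $|T'|=n$. Since $\max_{|T|=n}\gamma(S,T)\ge\gamma(S,T')$ (the maximum being taken over topological trees), summing over all $S$ with $|S|=k$ and using the identity gives $\sum_{|S|=k}\max_{|T|=n}\gamma(S,T)\ge 1$. By Theorem~\ref{RelationMaxJS}, for each fixed $S$ the quantity $\max_{|T|=n}\gamma(S,T)$ is nonincreasing in $n$ and converges to $J(S)$; as the sum has only finitely many terms I may pass to the limit termwise to obtain
\begin{equation*}
\sum_{\substack{|S|=k\\ S~\text{topological tree}}}J(S)=\lim_{n\to\infty}\sum_{\substack{|S|=k\\ S~\text{topological tree}}}\max_{|T|=n}\gamma(S,T)\ge 1.
\end{equation*}
Hence $\min_{|S|=k}J(S)\le 1/p(k)$, and since $p(k)\to\infty$ the right-hand side tends to $0$, finishing the proof.

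I do not anticipate a serious obstacle: the only delicate point is interchanging the limit in $n$ with the finite sum over $S$, which is justified by the monotone convergence $\max_{|T|=n}\gamma(S,T)\downarrow J(S)$ supplied by Theorem~\ref{RelationMaxJS}. As an explicit alternative one might prefer the family of complete binary trees $CD^2_h$, for which $J(CD^2_h)=I_2(CD^2_h)$ by Corollary~\ref{corIBJB}; the same averaging applied to binary trees (where $\sum_{|B|=2^h}I_2(B)\ge 1$) shows that the \emph{minimal} binary inducibility at each size decays to $0$. However, pinning the decay specifically onto $CD^2_h$ would require showing that the complete binary tree minimises the inducibility among binary trees of its size — a strictly stronger claim that the averaging argument above conveniently avoids, which is why I would present the averaging version as the main proof.
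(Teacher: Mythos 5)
Your proof is correct, but it takes a genuinely different route from the paper. The paper's proof is constructive and leans on external results: it restricts to binary trees, quotes from \cite{DossouOloryWagner} the bound $I_2(CD^2_h)\leq I_2(CD^2_2)^{2^{h-2}}=(3/7)^{2^{h-2}}$ together with $I_2(CD^2_2)=3/7$, and then transfers this to $J$ via Corollary~\ref{corIBJB} (which itself rests on Theorem~\ref{JSlimId}), thereby exhibiting the complete binary trees $CD^2_h$ as an explicit family whose inducibility decays doubly exponentially. Your averaging argument instead uses only the partition identity $\sum_{|S|=k}\gamma(S,T)=1$ --- valid because every $k$-subset of leaves of $T$ induces exactly one $k$-leaf topological tree (for $k\geq 2$ the root of the induced subtree, being the most recent common ancestor, genuinely has outdegree at least $2$) --- together with Theorem~\ref{RelationMaxJS}, which justifies the termwise passage to the limit in the finite sum and yields $\sum_{|S|=k}J(S)\geq 1$, hence $\min_{|S|=k}J(S)\leq 1/p(k)$. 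This is more elementary and self-contained (no appeal to the $d$-ary machinery or to the companion paper), and it even proves the stronger statement that small-inducibility trees exist at \emph{every} sufficiently large leaf number; the price is that it is non-constructive, identifying no specific trees and no explicit decay rate beyond $1/p(k)$, whereas the paper pins down a concrete family with a quantitative bound. The one point you leave implicit is that $p(k)\to\infty$, but this is immediate: for instance, for each $j$ with $2\leq j\leq k-2$, the tree whose root has $j$ leaf children plus one further child that is a star with $k-j$ leaves gives $k-3$ pairwise non-isomorphic $k$-leaf topological trees, so $p(k)\geq k-3$.
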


\begin{proof}
It is sufficient to consider binary trees. By a result that appears in~\cite{DossouOloryWagner}, we know that the inducibility $I_2(CD^2_h)$ of the complete binary tree $CD^2_h$ of height $h$ is at most $I_2(CD^2_2)^{2^{h-2}}$ for all $h\geq 2$. Therefore, since it is also proved in the same paper~\cite{DossouOloryWagner} (see also~\cite{czabarka2016inducibility}) that $I_2(CD^2_2)=3/7$, we deduce that
\begin{align*}
I_2(CD^2_h)\leq I_2(CD^2_2)^{2^{h-2}}=\Big(\frac{3}{7}\Big)^{2^{h-2}} \to 0~\text{as}~h \to \infty\,.
\end{align*}
Thus, the assertion of the corollary follows from Corollary~\ref{corIBJB}.
\end{proof}

We close this section by collecting two general facts about the inducibility $J(S)$. We note that some results in \cite{AudaceStephanPaper1} with respect to $I_d(S)$ hold analogously for $J(S)$ using our Theorem~\ref{JSlimId}.

\medskip
Note that $J(S)> 0$ for every topological tree $S$ because by definition, we have $J(S) \geq I_{\Delta (S)}(S)$ where $\Delta (S)$ is the maximum outdegree among the vertices of $S$ while it is proved in \cite{AudaceStephanPaper1} that $I_{\Delta (S)}(S)>0$. The following proposition provides a better lower bound that only depends on $|S|$.

\begin{proposition}\label{howsamllJS}
We have
\begin{align*}
J(S) \geq \frac{(-1+|S|)!}{-1+|S|^{-1+|S|}}
\end{align*}
for every topological tree $S$ with at least two leaves.
\end{proposition}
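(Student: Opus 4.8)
The plan is to exhibit an explicit sequence of topological trees built by self-similar substitution and to bound the density of $S$ in them from below. Write $k=|S|$, and for a topological tree $R$ let $S[R]$ denote the tree obtained from $S$ by replacing each of its $k$ leaves with a copy of $R$, so that the internal structure of $S$ sits on top of $k$ disjoint copies of $R$. I would define $R_0$ to be a single leaf and $R_m=S[R_{m-1}]$ for $m\geq 1$; then $|R_m|=k^m\to\infty$, so the $R_m$ form an admissible sequence for the limit superior defining $J(S)$ (equivalently, by Theorem~\ref{RelationMaxJS}, $J(S)=\lim_{m\to\infty}\max_{|T|=k^m}\gamma(S,T)$).

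First I would set up a recursive lower bound for $c(S,R_m)$. Inside $R_m$ there are $k$ distinguished \emph{blocks}, namely the copies of $R_{m-1}$ attached at the $k$ leaf-positions of the top copy of $S$. I count two disjoint families of $k$-element leaf subsets that induce $S$: those whose $k$ leaves all lie in a single block, of which there are $k\cdot c(S,R_{m-1})$; and those consisting of exactly one leaf taken from each of the $k$ blocks, of which there are $(k^{m-1})^k=k^{(m-1)k}$. This yields
\[
c(S,R_m)\ \geq\ k\,c(S,R_{m-1})+k^{(m-1)k}.
\]

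The heart of the argument, and the step I expect to require the most care, is the claim in the second family: choosing one leaf from each block induces a copy of $S$ rooted correctly. This should follow from the definition of a leaf-induced subtree: the common ancestors of leaves drawn from distinct blocks are exactly the internal vertices of the top copy of $S$, while within each block only a single leaf is selected, so after suppressing the resulting outdegree-$1$ vertices the induced tree is isomorphic to $S$, with root the apex of the top copy. Disjointness of the two families is immediate for $k\geq 2$, and since we only seek a lower bound we may safely discard all remaining copies of $S$.

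Finally I would normalise and solve. Using $\binom{k^m}{k}\leq k^{mk}/k!$ and setting $u_m=k^{-mk}c(S,R_m)$, the recursion becomes $u_m\geq k^{1-k}u_{m-1}+k^{-k}$ with $u_0=0$. Unrolling this geometric recursion gives $u_m\geq k^{-k}\tfrac{1-k^{(1-k)m}}{1-k^{1-k}}$, which tends to $\tfrac{1}{k^k-k}$ as $m\to\infty$. Consequently
\[
\gamma(S,R_m)\ \geq\ \frac{k!\,c(S,R_m)}{k^{mk}}=k!\,u_m\ \longrightarrow\ \frac{k!}{k^k-k}=\frac{(k-1)!}{k^{k-1}-1}\qquad(m\to\infty).
\]
Since $J(S)$ is the limit superior of $\gamma(S,T)$ over topological trees of growing size and the $R_m$ are such trees, I would conclude $J(S)\geq\frac{(k-1)!}{k^{k-1}-1}$, as claimed. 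As a sanity check, for $k=2$ the bound reads $\frac{1!}{2^{1}-1}=1$, consistent with $J(C_2)=1$.
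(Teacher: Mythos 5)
Your proof is correct, but it takes a genuinely different route from the paper's. The paper's proof is a two-line citation argument: it imports from \cite{AudaceStephanPaper1} the bound $I_d(S) \geq \frac{(|S|-1)!}{|S|^{|S|-1}-1}$ for every $d \geq \Delta(S)$, and then lets $d \to \infty$ via Theorem~\ref{JSlimId}. You instead re-derive the substance of that imported bound from scratch. Your construction checks out: with $k=|S|$, the iterated substitution $R_m = S[R_{m-1}]$ produces topological trees (the root of $R_{m-1}$ has outdegree $\geq 2$ for $m \geq 2$, and $R_1 = S$), the two families of leaf subsets are disjoint and both valid --- in particular your verification of the transversal case is the right key point, since the internal vertices of the top copy of $S$ retain their full outdegrees while the block roots and block-internal path vertices have outdegree $1$ in the extracted subtree and get suppressed, and the root of the induced copy is the apex because a transversal meets at least two root-branches --- so the recursion $c(S,R_m) \geq k\,c(S,R_{m-1}) + k^{(m-1)k}$ holds, with $u_0=0$ justified by $c(S,R_0)=0$ for $k \geq 2$. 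The normalization via $\binom{k^m}{k} \leq k^{mk}/k!$ and the geometric limit $\frac{k!}{k^k-k}=\frac{(k-1)!}{k^{k-1}-1}$ are correct, and passing from the subsequence of sizes $k^m$ to $J(S)$ is legitimate since Theorem~\ref{RelationMaxJS} guarantees the maximum density converges. What your approach buys: it is fully self-contained, avoiding both the black-box citation and Theorem~\ref{JSlimId}; moreover, since every $R_m$ is a $\Delta(S)$-ary tree, your computation simultaneously reproves the cited $d$-ary lower bound $I_{\Delta(S)}(S) \geq \frac{(k-1)!}{k^{k-1}-1}$, so nothing is lost in generality. What the paper's route buys is brevity and reuse of established machinery --- which matters in context, since Theorem~\ref{JSlimId} and the $d$-ary toolkit are already set up there.
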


\begin{proof}
Consider a topological tree $S$. By a result in \cite{AudaceStephanPaper1}, we have 
\begin{align*}
I_d(S) \geq \frac{(-1+|S|)!}{-1+|S|^{-1+|S|}}
\end{align*}
for every $d \geq \Delta (S)$. Passing to the limit as $d\to \infty$, Theorem~\ref{JSlimId} gives us the desired statement.
\end{proof}

For two topological trees $S_1,S_2$, denote by $\mathcal{F}(S_1;S_2)$ the unique topological tree which is constructed by appending the root of $S_2$ to every leaf of $S_1$. See Figure~\ref{TreeFS1S2} for a diagram.

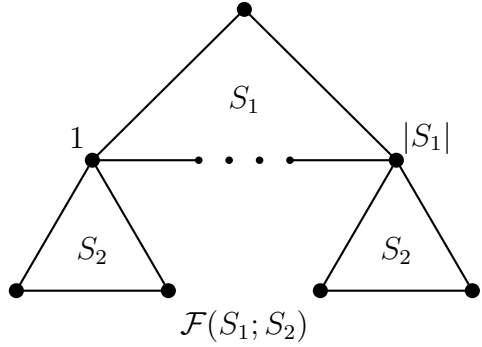
\begin{figure}[htbp]\centering
\begin{tikzpicture}[thick]

\node[fill=black,circle,inner sep=2pt] at (0,0) {};
\node[fill=black,circle,inner sep=2pt] at (4,0) {};
\node[fill=black,circle,inner sep=2pt] at (2,2) {};

\node[fill=black,circle,inner sep=2pt] at (-1,-1.73) {};
\node[fill=black,circle,inner sep=2pt] at (1,-1.73) {};
\node[fill=black,circle,inner sep=2pt] at (3,-1.73) {};
\node[fill=black,circle,inner sep=2pt] at (5,-1.73) {};

\node[fill=black,circle,inner sep=1pt] at (1.4,0) {};
\node[fill=black,circle,inner sep=1pt] at (1.8,0) {};
\node[fill=black,circle,inner sep=1pt] at (2.2,0) {};
\node[fill=black,circle,inner sep=1pt] at (2.6,0) {};

\node at (2,0.8) {$S_1$};
\node at (0,-1.2) {$S_2$};
\node at (4,-1.2) {$S_2$};

\draw (1.4,0)--(0,0)--(2,2)--(4,0)--(2.6,0);

\draw (-1,-1.73)--(1,-1.73)--(0,0)--cycle;
\draw (3,-1.73)--(5,-1.73)--(4,0)--cycle;

\node at (2,-2.2) {$\mathcal{F}(S_1;S_2)$};

\node at (-0.2,0.3) {$1$};
\node at (4.4,0.3) {$|S_1|$};

\end{tikzpicture}
\caption{A rough picture of the tree $\mathcal{F}(S_1;S_2)$ defined for Theorem~\ref{Thmhowsmal}.} \label{TreeFS1S2}
\end{figure}

\begin{theorem}\label{Thmhowsmal}
The tree $\mathcal{F}(S_1;S_2)$ satisfies
\begin{align*}
J\big(\mathcal{F}(S_1;S_2) \big) \geq \frac{(|S_1|\cdot |S_2|)!}{(|S_2|!)^{|S_1|}\cdot |S_1|^{|S_1|\cdot |S_2|}} \cdot J(S_2)^{|S_1|}
\end{align*}
for every pair of topological trees $S_1$ and $S_2$.
\end{theorem}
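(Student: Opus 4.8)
The plan is to exhibit an explicit family of topological trees in which $\mathcal{F}(S_1;S_2)$ appears with density approaching the claimed bound. For each $n$ I would let $T_n$ be an $n$-leaf topological tree attaining $\max_{|T|=n}\gamma(S_2,T)$, so that $\gamma(S_2,T_n)\to J(S_2)$ by Theorem~\ref{RelationMaxJS}. I then form $U_n:=\mathcal{F}(S_1;T_n)$, obtained by attaching a copy of $T_n$ at each of the $|S_1|$ leaves of $S_1$; it has $|U_n|=|S_1|\cdot n$ leaves, and the goal is to bound $\gamma(\mathcal{F}(S_1;S_2),U_n)$ from below.

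The combinatorial heart is a lower bound on $c(\mathcal{F}(S_1;S_2),U_n)$. Write $T_n^{(1)},\dots,T_n^{(|S_1|)}$ for the $|S_1|$ copies of $T_n$ inside $U_n$, where $T_n^{(i)}$ is rooted at the $i$-th leaf $w_i$ of $S_1$. I claim that selecting, in each branch $T_n^{(i)}$, a set of $|S_2|$ leaves that induces a copy of $S_2$, and taking the union of these $|S_1|$ leaf sets, induces a copy of $\mathcal{F}(S_1;S_2)$ in $U_n$. This requires a short verification: within branch $i$ the chosen leaves induce $S_2$ with some root $r_i$, and every vertex on the path from $r_i$ up to $w_i$ has outdegree $1$ in the minimal subtree connecting the whole leaf set (all chosen leaves of that branch lie below $r_i$, and no other chosen leaves occur in that branch), so suppression attaches the copy of $S_2$ directly at the $i$-th leaf position of $S_1$. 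Since the internal vertices of $S_1$ all have outdegree at least $2$ and each carries chosen leaves below it, the $S_1$-structure survives suppression intact and the root of the induced tree is the root of $S_1$; the result is precisely $S_1$ with a copy of $S_2$ at each leaf, i.e.\ $\mathcal{F}(S_1;S_2)$. Distinct tuples of leaf sets yield distinct leaf sets of $U_n$ (the branches are disjoint), so
\[
c(\mathcal{F}(S_1;S_2),U_n)\;\geq\;c(S_2,T_n)^{|S_1|}.
\]

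From here I would pass to densities. Using $c(S_2,T_n)=\gamma(S_2,T_n)\binom{n}{|S_2|}$ and $|\mathcal{F}(S_1;S_2)|=|S_1|\cdot|S_2|$, this gives
\[
\gamma(\mathcal{F}(S_1;S_2),U_n)\;\geq\;\frac{\gamma(S_2,T_n)^{|S_1|}\,\binom{n}{|S_2|}^{|S_1|}}{\binom{|S_1|\,n}{|S_1|\,|S_2|}}.
\]
The standard asymptotics $\binom{n}{|S_2|}\sim n^{|S_2|}/|S_2|!$ and $\binom{|S_1|n}{|S_1||S_2|}\sim(|S_1|n)^{|S_1||S_2|}/(|S_1||S_2|)!$ yield
\[
\lim_{n\to\infty}\frac{\binom{n}{|S_2|}^{|S_1|}}{\binom{|S_1|n}{|S_1||S_2|}}=\frac{(|S_1|\,|S_2|)!}{(|S_2|!)^{|S_1|}\,|S_1|^{|S_1|\,|S_2|}},
\]
so, since $\gamma(S_2,T_n)\to J(S_2)$, the displayed lower bound converges to $\frac{(|S_1||S_2|)!}{(|S_2|!)^{|S_1|}|S_1|^{|S_1||S_2|}}\,J(S_2)^{|S_1|}$. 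As $(U_n)$ is a sequence of topological trees with $|U_n|\to\infty$, the definition of $J$ forces $J(\mathcal{F}(S_1;S_2))\geq\limsup_n\gamma(\mathcal{F}(S_1;S_2),U_n)$, which is at least this limit, giving the theorem.

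The step I expect to be most delicate is the isomorphism verification in the second paragraph: one must check carefully that suppressing the outdegree-$1$ vertices along the paths from each $r_i$ up into $S_1$ reproduces exactly $\mathcal{F}(S_1;S_2)$ (with the correct root, namely the root of $S_1$) and does not merge branches or distort the $S_1$-structure. The remaining work is routine bookkeeping with the binomial asymptotics.
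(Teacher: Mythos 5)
Your proof is correct, but it takes a genuinely different route from the paper. The paper's own proof is a two-line reduction: it invokes the $d$-ary analogue of this inequality from \cite{AudaceStephanPaper1}, namely $I_d\big(\mathcal{F}(S_1;S_2)\big) \geq \frac{(|S_1|\cdot|S_2|)!}{(|S_2|!)^{|S_1|}\cdot|S_1|^{|S_1|\cdot|S_2|}}\cdot I_d(S_2)^{|S_1|}$ for $d \geq \Delta\big(\mathcal{F}(S_1;S_2)\big)$, and then lets $d\to\infty$ using Theorem~\ref{JSlimId} ($J(S)=\lim_{d\to\infty} I_d(S)$). You instead reprove the underlying combinatorial estimate directly in the topological setting: taking near-extremal trees $T_n$ for $S_2$ (so $\gamma(S_2,T_n)\to J(S_2)$ by Theorem~\ref{RelationMaxJS}), forming $U_n=\mathcal{F}(S_1;T_n)$, establishing $c(\mathcal{F}(S_1;S_2),U_n)\geq c(S_2,T_n)^{|S_1|}$ via the branch-by-branch selection, and computing the binomial asymptotics — all of which checks out, including the suppression argument (each path from $r_i$ up through $w_i$ consists of outdegree-$1$ vertices of the spanning subtree, while every internal vertex of $S_1$ retains its full outdegree, so the induced tree is exactly $\mathcal{F}(S_1;S_2)$ rooted at the root of $S_1$) and the injectivity of the map from tuples $(L_1,\ldots,L_{|S_1|})$ to leaf subsets. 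What each approach buys: the paper's proof is shorter but depends on an external result and on the full machinery of Theorem~\ref{JSlimId}, whereas yours is self-contained within this paper (needing only Theorem~\ref{RelationMaxJS}) and makes the extremal construction explicit — in essence you have reconstructed the construction that presumably underlies the cited $d$-ary result, transplanted to topological trees. Two small points worth adding for completeness: observe that $U_n$ is indeed a topological tree (the root of $T_n$ has outdegree at least $2$ for $n\geq 2$, so no outdegree-$1$ vertices arise at the attachment points), and note that the degenerate cases $|S_1|=1$ or $|S_2|=1$ make the claimed inequality trivial, so nothing is lost there.
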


\begin{proof}
Again, by a result in \cite{AudaceStephanPaper1}, we have 
\begin{align*}
I_d\big(\mathcal{F}(S_1;S_2) \big) \geq \frac{(|S_1|\cdot |S_2|)!}{(|S_2|!)^{|S_1|}\cdot |S_1|^{|S_1|\cdot |S_2|}} \cdot I_d(S_2)^{|S_1|}
\end{align*}
for every $d \geq \Delta \big(\mathcal{F}(S_1;S_2) \big)$. Taking the limit of both sides as $d\to \infty$, and invoking Theorem~\ref{JSlimId}, we obtain the desired inequality of the theorem.
\end{proof}

It turns out that if one knows the inducibility of a topological tree $S$, then one can obtain a lower bound on the inducibility of any of the leaf-induced subtrees of $S$:

\begin{theorem}\label{RST}
For any three topological trees $R$, $S$ and $T$ such that $|T|\geq |S| \geq |R|$, we have
\begin{align*}
c(R,T)\geq \frac{c(R,S)}{\dbinom{-|R|+|T|}{-|R|+|S|}} \cdot c(S,T)\,. 
\end{align*}
In particular, we obtain 
\begin{align*}
J(R)\geq J(S) \cdot \gamma(R,S)\,.
\end{align*}
\end{theorem}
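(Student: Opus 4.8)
The plan is to prove the counting inequality by a double-counting argument over nested pairs of leaf-subsets of $T$, and then to deduce the inducibility bound by normalising and passing to the limit superior. Throughout I would write $r=|R|$, $s=|S|$, $t=|T|$, and let $L(T)$ denote the leaf set of $T$ as in the earlier proofs.

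The quantity I would count two ways is the number $N$ of pairs $(L_R,L_S)$ with $L_R\subseteq L_S\subseteq L(T)$, $|L_R|=r$, $|L_S|=s$, such that $L_S$ induces a copy of $S$ in $T$ and $L_R$ induces a copy of $R$ in $T$. The crucial structural fact — and the step I expect to be the main obstacle — is the \emph{transitivity} of the leaf-induced subtree operation: if $L_S$ induces a tree $S'\cong S$ and $L_R\subseteq L_S$, then the rooted tree induced by $L_R$ \emph{in $T$} coincides with the rooted tree induced by the corresponding leaves \emph{in $S'$}. This holds because, for $|L_R|\geq 2$, the most recent common ancestor of $L_R$ already has outdegree at least $2$ in the Steiner tree spanning $L_R$, hence a fortiori in the Steiner tree spanning $L_S$; it is therefore never suppressed, so both the tree shape and the distinguished root survive restriction to $S'$. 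I would record this as a short remark (the case $|L_R|=1$ being trivial) and then use it freely.

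Granting transitivity, I would evaluate $N$ in two ways. Counting by the outer set first, there are $c(S,T)$ admissible choices of $L_S$, and for each such $L_S$ the number of $r$-subsets $L_R\subseteq L_S$ inducing $R$ equals $c(R,S')=c(R,S)$ by transitivity together with $S'\cong S$; hence $N=c(S,T)\,c(R,S)$. Counting by the inner set first, there are $c(R,T)$ choices of $L_R$, and each extends to an $s$-subset $L_S$ in at most $\binom{t-r}{s-r}$ ways (not all of which need induce $S$); hence $N\leq c(R,T)\binom{t-r}{s-r}$. Comparing the two expressions and rearranging gives exactly the stated bound $c(R,T)\geq\binom{t-r}{s-r}^{-1}c(R,S)\,c(S,T)$.

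For the inducibility consequence, I would divide the counting inequality by $\binom{t}{r}$ and substitute $c(S,T)=\gamma(S,T)\binom{t}{s}$. The binomial prefactor collapses cleanly, since $\binom{t}{s}\big/\big(\binom{t-r}{s-r}\binom{t}{r}\big)=1/\binom{s}{r}$, and recalling $\gamma(R,S)=c(R,S)/\binom{s}{r}$ this yields the pointwise bound $\gamma(R,T)\geq\gamma(R,S)\,\gamma(S,T)$, valid for every topological tree $T$ with $|T|\geq|S|$. Finally, since $\gamma(R,S)$ is a nonnegative constant independent of $T$, taking the limit superior over topological trees $T$ as $|T|\to\infty$ and pulling this constant through the limit superior produces $J(R)\geq\gamma(R,S)\,J(S)$, which is the claim.
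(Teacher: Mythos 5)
Your proposal is correct and follows essentially the same route as the paper: the same double count of nested pairs (copies of $S$ in $T$ paired with copies of $R$ inside them), the same upper bound $\binom{|T|-|R|}{|S|-|R|}$ on extensions of a fixed $L_R$, the same binomial collapse $\binom{|T|}{|S|}\binom{|S|}{|R|}=\binom{|T|}{|R|}\binom{|T|-|R|}{|S|-|R|}$ giving $\gamma(R,T)\geq\gamma(R,S)\,\gamma(S,T)$, and the same passage to the limit (legitimate either via $\limsup$ directly or via Theorem~\ref{RelationMaxJS}). The only difference is presentational: you isolate and justify the transitivity of the leaf-induced subtree operation (branching vertices and the most recent common ancestor of $L_R$ survive the suppression of outdegree-$1$ vertices), a fact the paper's proof uses implicitly; making it explicit is a genuine, if small, improvement in rigour rather than a different argument.
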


\begin{proof}
Given a topological tree $S$, we can count the number of appearances of any smaller topological tree $R$ in a larger topological tree $T$ by first counting the number of copies of $S$ in $T$ and then the number of copies of $R$ in $S$. Clearly, $c(R,T)$ is overcounted in this way based on the observation that the intersection of the subsets of leaves of $T$ that induce a copy of $S$ may not be empty. So we would like to take this observation into account.

Assume $|T|\geq |S| \geq |R|$. Given a subset $L$ of leaves of $T$ that induces a copy of $R$, we can then choose any subset $L^{\prime}$ of $|S|-|R|$ leaves from the set of leaves of $T$ without $L$ so that $L \cup L^{\prime}$ induces a subtree of $T$ with $|S|$ leaves. Since the subtree induced by the subset $L \cup L^{\prime}$ of leaves of $T$ has $|S|$ leaves, we deduce that the quantity $c(S,T)\cdot c(R,S)$ is at most $\binom{|T|-|R|}{|S|-|R|} \cdot c(R,T)$. Consequently, one obtains a simple lower bound on $c(R,T)$, namely
\begin{align*}
c(R,T) \geq \frac{c(S,T) \cdot c(R,S)}{\dbinom{|T|-|R|}{|S|-|R|}}\,.
\end{align*}

As a next step, we take the density:
\begin{align*}
\gamma(R,T) &\geq \gamma(R,S) \cdot \frac{\dbinom{|S|}{|R|}\cdot \dbinom{|T|}{|S|}}{\dbinom{|T|}{|R|} \cdot \dbinom{|T|-|R|}{|S|-|R|} }\cdot \gamma(S,T)\\
& =\gamma(R,S) \cdot \gamma(S,T)\,.
\end{align*}

Finally, in view of Theorem~\ref{RelationMaxJS}, we can take the limit to obtain the desired result.
\end{proof}

\begin{remark}
Let $d\geq 2$ be a fixed positive integer. By the same argument as in the proof of Theorem~\ref{RST}, it is also seen that
\begin{align*}
I_d(D_1)\geq I_d(D_2) \cdot \gamma(D_1,D_2)
\end{align*}
for any two $d$-ary tree $D_1$ and $D_2$ satisfying $|D_1|\leq |D_2|$.
\end{remark}

\section{Concluding comments}

It would be interesting to answer the following question:

\begin{question}
Can we explicitly determine the inducibility $J(S)$ of any topological tree $S$ other than a star or a binary tree? Note that in the degree-restricted context, the answer to this question is affirmative \cite{DossouOloryWagner}.
\end{question}

Finally, we conjecture that equality never holds in Proposition~\ref{howsamllJS}. It is also natural to formulate the following problem:

\begin{question}
Is there a sequence $(T_n)_{n\geq 1}$ of topological trees such that $|T_n|=n$ and $\lim_{n\to \infty} \gamma(S,T_n)$ exists and is positive for all topological trees $S$? In the degree-restricted context, the answer is positive \cite{DossouOloryWagner}.
\end{question}

\end{document}